\documentclass{amsart}%
\usepackage{amssymb}
\usepackage{color}
\usepackage{amsmath}
\usepackage{graphicx}
\usepackage{amsfonts}%
\setcounter{MaxMatrixCols}{30}

\newcommand{\EE}{\ensuremath{\mathbb{E}}}

\newcommand{\NN}{\ensuremath{\mathbb{N}}}

\newcommand{\PP}{\ensuremath{\mathbb{P}}}

\newcommand{\RR}{\ensuremath{\mathbb{R}}}

\newcommand{\ZZ}{\ensuremath{\mathbb{Z}}}

\renewcommand{\phi}{\varphi}

%griechische Buchstaben
%Achtung: Mit den Abk\"urzungen geht die
%automatische Gr\"o"senwahl verloren!

\newcommand{\eps}{\ensuremath{\epsilon}}

%kalligraphische Buchstaben

\newcommand{\bB}{\ensuremath{\mathcal{B}}}

%TCIDATA{OutputFilter=latex2.dll}
%TCIDATA{Version=5.00.0.2552}
%TCIDATA{CSTFile=amsart.cst}
%TCIDATA{LastRevised=Wednesday, February 28, 2007 08:59:40}
%TCIDATA{ }
%TCIDATA{ }
%TCIDATA{Language=American English}
\newtheorem{theorem}{Theorem}

\newtheorem{lemma}[theorem]{Lemma}

\newtheorem{remark}[theorem]{Remark}

\newcommand{\ltn}{\ensuremath{\left| \! \left| \! \left|}}
\newcommand{\rtn}{\ensuremath{\right| \! \right| \! \right|}}

\title[Asymptotical stability]
{Asymptotical stability of differential equations driven by H{\"o}lder--continuous paths}

\subjclass[2000]{Primary: 37L15 ; Secondary: 34A34, 34F05}
\keywords{differential equations, fractional Brownian motion, pathwise solutions, stability. \\
This work was partially supported by MTM2011-22411, FEDER founding (M.J.
Garrido-Atienza and B. Schmalfu{\ss}), and by ?? (A. Neuenkirch). }

\author{Mar\'{\i}a J. Garrido-Atienza}
\address[Mar\'{\i}a J. Garrido-Atienza]{Dpto. Ecuaciones Diferenciales y An\'alisis Num\'erico\\
Universidad de Sevilla, Apdo. de Correos 1160, 41080-Sevilla,
Spain} \email[Mar\'{\i}a J. Garrido-Atienza]{mgarrido@us.es}

\author{Andreas Neuenkirch}
\address[Andreas Neuenkirch] {Universit\"at Mannheim, Institut f\"ur Mathematik, A5, 6, D-68131, Mannheim, Germany}\email[Andreas Neuenkirch]{neuenkirch@kiwi.math.uni-mannheim.de}

\author{Bj{\"o}rn Schmalfu{\ss }}
\address[Bj{\"o}rn Schmalfu{\ss }]{Institut f\"{u}r Stochastik\\
Friedrich Schiller Universit{\"a}t Jena, Ernst Abbe Platz 2, D-77043\\
Jena,
Germany\\
 }
\email[Bj{\"o}rn Schmalfu{\ss }]{bjoern.schmalfuss@uni-jena.de}

\parindent0.0em

\subjclass[2000]{Primary: 37L15; Secondary: 34A34, 34F05}
\keywords{differential equations, H\"older--continuous driving signal, fractional Brownian motion, stability. \\
This work was partially supported by MTM2015-63723-P, FEDER funding (M.J.
Garrido-Atienza and B. Schmalfu{\ss}). }

\begin{document}

\begin{abstract}
In this manuscript, we establish asymptotic local exponential  stability of the trivial solution of differential equations driven by H\"older--continuous paths with H\"older exponent greater than $1/2$. This applies in particular to stochastic differential equations driven by fractional Brownian motion with Hurst parameter greater than $1/2$. We motivate the study of local stability by giving a particular example of a scalar equation, where global stability of the trivial solution can be obtained.
\end{abstract}

\maketitle

\section*{\today}

\section{Introduction}
This article is concerned with the study of the stability of $\RR^d$--valued ordinary differential equations driven by H\"older--continuous signals $\omega$ of the form
\begin{align}\label{eq0}
du(t)=F(u(t))dt+G(u(t))d\omega(t),\qquad t\geq 0,
\end{align}
where $u(0)=u_0\in \RR^d$ and $F$, $G$ are smooth functions defined on $\RR^d$. The canonical example for $\omega$ is a sample path of fractional Brownian motion with Hurst index greater than $1/2$. \\

The existence and uniqueness of solutions for ordinary differential equations driven by a H\"older continuous function with H\"older exponent greater than $1/2$ is now well understood, in the context of fractional calculus, see \cite{GMS08}, \cite{NuaRas02} and \cite{Zah98} to name only a few references, and in the context of rough path theory as well, see e.g. \cite{FV10}, \cite{L} and \cite{LQ}.

However, the study of the stability of that kind of equations is in its beginnings, in contrast to the case when the driving noise is a standard Brownian motion, where the existing literature is huge. Let us mention here only a few  pioneering investigations related to stability. Almost sure exponential stability was considered in \cite{K67} for  linear  SDEs  with  Brownian  motion
as integrator using Lyapunov exponents and ergodic theory. In \cite{A84} a.s. exponential stability and uniform boundedness was proved. The multiplicative ergodic theorem of Oseledets allowed the analysis of all exponents for a stochastic flow, leading to a detailed analysis of the dynamics of random systems, see \cite{Arn98}. In \cite{Mao91} the author uses stochastic Lyapunov functions to discuss the stability of stochastic differential equations with semimartingale integrators, making use of the exponential martingale inequality,  obtaining sufficient criteria for a.s. exponential stability and for polynomial stability. In a further book, the same author in \cite{Mao94} gives a consistent account of the theory of stochastic differential equations driven by a nonlinear integrator and their exponential stability at fixed points via Lyapunov function techniques.\\

The fractional Brownian motion $B^H$ is a family of processes indexed by the Hurst parameter $H\in (0,1)$. When $H=1/2$ the fractional Brownian motion is the standard Brownian motion, but when $H\not = 1/2$ the process $B^H$ has properties that differ to those of $B^{1/2}$. In fact, $B^H$ is not a semimartingale nor a Markov process unless $H =1/2$, and therefore the techniques to carry out our study must be different to the ones used to treat the Brownian case. This paper can be seen as a first attempt to analyze the exponential stability of the solution of (\ref{eq0}), and as we will see below, we are able to obtain local stability.  Only in very particular situations in which we can transform the equation into a random ordinary differential equation we can establish global exponential stability, see Section \ref{s} above. Therefore, more efforts are needed in order to cover the global stability and this will be the topic of a forthcoming paper.\\

The long time behavior of solutions has been also carried out by means of the theory of random dynamical systems, by analyzing for instance the existence and inner structure of random attractors and the existence of random fixed points. In \cite{MasSch} the existence of exponentially attracting random fixed points and hence the existence of stationary solutions was shown for linear and semilinear infinite--dimensional stochastic equations with additive fractional Brownian noise with $H>1/2$. In \cite{GKN}, the existence and uniqueness of a stationary solution that pathwise attracts all other solutions was obtained for stochastic differential equations driven by additive fractional Brownian motion with any Hurst parameter. More recently, the papers \cite{GMS08} for a finite--dimensional setting and \cite{GGSch} for an infinite-dimensional one, have been devoted to prove the existence of random attractors for stochastic differential equations with general diffusion coefficients and driven by a fractional Brownian motion with Hurst parameter $H>1/2$, without transformation of the stochastic equation into a random equation, but taking advantage of an $\omega$--wise interpretation of the stochastic integral with respect to the fractional Brownian motion, as we will also do in this article. In both papers, the main step in the proof of the existence of a random attractor consisted of showing the existence of an absorbing set, which was achieved by means of a discrete time version of the Gronwall lemma. In this article our approach also uses a Gronwall-like lemma but we do not require the existence of absorbing sets. Instead, it is based on the study of a sequence $(u^n)_{n\in \NN}$, where each $u^n$ is defined on $[0,1]$ and it is driven by $\theta_n \omega$ (here $\theta$ denotes the Wiener shift flow, see Section \ref{s0}). For each $u^n$ we will derive suitable a priori estimates such we will end up with the local exponential stability of (\ref{eq0}).\\

The remainder of this article is structured as follows: we begin with an example in Section \ref{s}, where we can establish global stability. In Section \ref{s0} we recall some basic facts about fractional Brownian motions and tempered sets, and prove in particular a Gronwall-like lemma, while in Section \ref{s1} we recall the definition and main properties of the Young--integral. Section \ref{s2} concerns the analysis of the existence and uniqueness of solutions, and the definition and the main properties of the localized coefficients  (via a suitable cut-off function) of our equation,  which we will use in our analysis. Finally, Section \ref{s3} is devoted to the study of the local stability of our problem.

\section{Global stability for linear scalar noise}\label{s}

In this section we want to examine the following scalar equation
 \begin{equation}\label{eq-ds}
  du(t)=F(u(t))dt+ \gamma u(t) d\omega(t), \qquad t \geq 0,
\end{equation}
where $F: \RR \rightarrow \RR$ is continuously differentiable with bounded derivative, $\gamma \in \RR$
and $\omega:\RR^+ \rightarrow \mathbb{R}$ is a H\"older--continuous function of order $\beta^{\prime} >1/2$.
Moreover, we assume that we have the splitting
$$  F(x)=-\lambda x+\hat{F}(x), \qquad x \in \mathbb{R}, $$
with $\lambda >0$,
and
\begin{align} \label{ass-F} | \hat{F}(x) |  \leq  \mu |x|, \quad x \in \mathbb{R}, \end{align}
where $0 \leq \mu < \lambda $.

We are going to see that the scalar and linear structure of the noise now allows us to obtain global exponential stability using the Doss-Sussmann transformation.
For this, define first
$$ v(t)=e^{\lambda t}u(t), \qquad t \geq 0.$$
Then the usual change of variable formula, see e.g. Theorem 4.3.1 in \cite{Zah98},  gives
\begin{align} dv(t)= b(t,v(t))dt + \gamma  v(t) d \omega(t), \qquad t \geq 0, \label{damped-transform} \end{align}
where we have set
\begin{align} \label{b-def} b(t,x)=e^{\lambda t}\hat{F}(e^{-\lambda t}x), \qquad t \geq 0, \,\, x \in \mathbb{R}. \end{align}
The results of Doss, see  Theorem 19 in \cite{doss}, state that the solution of  equation \eqref{damped-transform} can be written as
\begin{align} v(t)=h(D(t),\omega(t)), \qquad t \geq 0,  \label{ds-ansatz} \end{align}
where $h: \mathbb{R} \times \mathbb{R} \rightarrow \mathbb{R}$ is the solution of
\begin{align} \label{ds-h} \frac{\partial}{\partial \beta} h(\alpha,\beta)= \gamma  h(\alpha, \beta), \qquad h(\alpha,0)=\alpha, \qquad  \alpha, \beta \in \RR,    \end{align}
i.e. $$  h(\alpha,\beta)=e^{\gamma \beta} \alpha, $$
and $D: \RR^+ \rightarrow  \mathbb{R}$ solves
\begin{align}
 dD(t)&=  e^{  -\gamma \omega(t) }  b \big (t,e^{\gamma \omega(t)} D(t) \big)dt, \qquad t \geq 0, \label{ds-d}
 \\ D(0)&=u(0). \nonumber
\end{align}
The idea behind this representation is to assume that the solution of \eqref{eq-ds} can be written in the form \eqref{ds-ansatz} and to derive necessary and sufficient conditions
for $D$ and $h$, i.e. \eqref{ds-h} and \eqref{ds-d}. In \cite{doss} this approach is introduced for SDEs driven by Brownian motion, but with more general diffusion coefficients, which satisfy a commutativity condition. In our context,
this representation follows by the change of variable formula from Theorem 4.3.1 in \cite{Zah98}.

Using \eqref{b-def}  we obtain
\begin{align*}
dD(t)&=     e^{-\gamma \omega(t) + \lambda t} \hat{F}(e^{\gamma w(t) - \lambda t} D(t) \big)dt, \qquad t \geq 0.
\end{align*}
Now set $r(t)= |D(t)|^2$. Then for $t\geq 0$ we have
\begin{align*} dr(t) &= 2 D(t)  e^{-\gamma \omega(t) + \lambda t} \widehat{F} \big ( e^{\gamma \omega(t) - \lambda t} D(t) \big)dt \leq  2 \mu  | D(t) |^2dt  = 2 \mu r(t)dt,
 \end{align*}
 using assumption \eqref{ass-F}.
Therefore,  Gronwall's Lemma gives
$$   |D(t)|^2= |r(t) | \leq e^{2 \mu t} |u(0)|^{2}, \qquad t \geq 0, $$
and hence  \eqref{ds-ansatz} implies
that
$$ |u(t)| \leq  e^{ \gamma |\omega(t)|} e^{(\mu-\lambda)t} |u(0)|, \qquad t \geq 0.$$
If
\begin{align}
\lim_{t \rightarrow \infty} \frac{|\omega(t)|}{t} =0, \label{w-growth}
\end{align}
then it follows
$$ \lim_{t \rightarrow \infty} e^{ \delta t}|u(t)|=0,$$
for all $ 0\leq \delta < \mu- \lambda$.

Condition \eqref{w-growth} is in particular fulfilled for almost all sample paths of fractional Brownian motion $B^H$, see \eqref{sub-lin-growth}.
Hence we obtain in this case almost sure exponential stability of the zero solution for all rates smaller than $\lambda-\mu$.

So, in the particular situation of  a scalar  equation with a linear multiplicative noise, the above method ensures global exponential stability of the zero solution.
However, this method seems not to be applicable in general when considering a multidimensional driven signal even if the diffusion coefficient is still linear.

As we have said in the Introduction, we want to consider general multidimensional noise perturbations of the type $G(u(\cdot))d\omega(\cdot)$. Our strategy here will be to deal directly with the equation (\ref{eq0}) to obtain local exponential stability.

\section{Preliminaries}\label{s0}

Let us consider the space $C^{\beta^\prime}([0,T];\RR^m)$ of $\beta^\prime$--H{\"o}lder--continuous functions on some interval $[0,T]$ with values in $\RR^m$. The norm in this space is given by
\begin{align*}
  \|\omega\|_{\beta^\prime}&=\|\omega\|_{\infty,0,T}+ \ltn \omega \rtn_{\beta^\prime,0,T}, % =\|\omega\|_\infty+\ltn \omega \rtn_{\beta^\prime},
  \end{align*}
  where
  \begin{align*}
  &\|\omega\|_{\infty,0,T}=\sup_{r\in[0,T]}\|\omega(r)\|,\quad \ltn \omega \rtn_{\beta^\prime,0,T}=\sup_{0\le q<r\le T}\frac{\|\omega(r)-\omega(q)\|}{(r-q)^{\beta^\prime}}.
\end{align*}

\medskip

A fractional Brownian motion (fBm) $B^H$ with Hurst parameter $H\in (0,1)$ is a centered Gau{\ss}--process with covariance function
\begin{equation*}
  R(s,t)=\frac12Q(|t|^{2H}+|s|^{2H}-|t-s|^{2H}), \qquad s,t \in \RR,
\end{equation*}
where $Q$ is a non--negative and symmetric matrix in $\RR^m\otimes\RR^m$. For our further purposes it is important to emphasize that the fractional Brownian motion has a version which is $\beta^\prime$--H{\"o}lder--continuous for any $\beta^\prime<H$. Let $C_0(\RR;\RR^m)$ be the set of continuous functions which are zero at zero
equipped with the compact open topology.  Then let $(C_0(\RR;\RR^m),\bB(C_0(\RR;\RR^m)),\PP_H)$ be the
canonical space for  fractional Brownian motion, i.e. $B^H(\omega)=\omega$, where $\PP_H$ denotes the measure of the fBm with Hurst--parameter $H$.  On $C_0(\RR;\RR^m)$ we can introduce the Wiener shift $\theta$ given by
\begin{equation}\label{shift}
  \theta_t\omega(\cdot)=\omega(\cdot+t)-\omega(t),\quad t\in\RR,\quad \omega\in C_0(\RR;\RR^m).
\end{equation}
In particular $\theta_t$ leaves $\PP_H$ invariant.

\medskip

Since the fractional Brownian motion is a Gau{\ss}--process we can show that for $m\in\NN$ and $s,\,t\in \RR$ we have
\begin{equation*}
  \int\|B^H(t)-B^H(s)\|^{2m}d\PP_H(\omega) = c_{m}|t-s|^{2Hm}\quad \text{for all }m\in\NN,
  \end{equation*}
where $c_m= \EE \|B^{1/2}(1)\|^{2m}$.
Applying  Kunita \cite{Kun90} Theorem 1.4.1 we get that
\begin{equation}\label{eq6}
  \EE\|B^H\|_{\beta^{\prime},0,T}^n<\infty
\end{equation}
for any $T>0$, $\beta^{\prime} < H$ and for  $n\in\NN$.
Moreover, since
$$ \mathbb{E} \|B^H\|_{\infty,0,T}^n \leq C_{n,H} T^{nH} $$
for all $n \geq 1$ and all $T>0$, see e.g. Chapter 5.1 in \cite{nualart-book}, the Borel-Cantelli Lemma implies that
\begin{align} \lim_{t \rightarrow \infty} \frac{|B_t^H(\omega)|}{t}=0 \qquad \textrm{for almost all} \quad  \omega \in \Omega. \label{sub-lin-growth} \end{align}

\medskip

The classical Garsia-Rodemich-Rumsey inequality (see \cite{GRR}) states that
$$ \sup_{s,t \in [0,T]}\frac{\|B^{H}_t - B^H_s \|}{|t-s|^{\gamma}}\leq C_{m,\gamma,p}  \left( \int_{0}^T \int_{0}^T \frac{ \|B^{H}_u - B^H_v \|^{2p}}{|u-v|^{2  \gamma p+2}} \, du \,dv \right)^{1/(2p)}
$$
for any $T >0$, $\gamma \in (0,1)$, $p \geq 1$.  Hence \eqref{eq6} implies  by  choosing $\gamma<H$ and $p \geq 1$ sufficiently large that the right hand side of 
the above equation has a finite first moment and hence is $\mathbb{P}_H$-a.s. finite. This is in particular true for the canonical fractional Brownian motion
\begin{equation*}
(C_{0}(\RR;\RR^m),\bB(C_{0}(\RR;\RR^m)),\PP_H)
\end{equation*}
with $B^H(t,\omega)=\omega(t)$.
So, we can conclude that the set $C_0^{\beta^\prime}(\RR;\RR^d)$ of continuous functions which have a finite H{\"o}lder-seminorm on any compact interval and which are zero at zero has $\PP_H$-measure one for $\beta^\prime<H$.
(Alternatively the Kolmogorov Theorem, see Bauer \cite{Bau96} \S39,  could be applied to obtain the same result.)
This set is $\theta$-invariant.

%{\color{blue}Following  Bauer \cite{Bau96} \S39 and \S40   we obtain a canonical fractional Brownian motion
%\begin{equation}\label{eq5}
%  (C_{0}(\RR;\RR^m),\bB(C_{0}(\RR;\RR^m)),\PP_H).
%\end{equation}
%Applying again the Kolmogorov Theorem (see Bauer \cite{Bau96} \S39) we obtain
%a $\beta^\prime$-H{\"o}lder--continuous version. We can conclude that the set $C_0^{\beta^\prime}(\RR;\RR^d)$ of continuous functions which have a finite H{\"o}lder-seminorm on any compact interval and which are zero at zero has $\PP_H$-measure one for $\beta^\prime<H$.
%This set is $\theta$-invariant.}

\medskip

A random variable $R\in (0,\infty)$ is called tempered from above if
\begin{equation}\label{eq4b}
\limsup_{t\to \pm\infty}\frac{\log^+R(\theta_t\omega)}{t}=0 \qquad \text{for almost all} \quad \omega \in \Omega.
\end{equation}
Therefore, temperedness from above describes the subexponential growth or decay of a stochastic stationary process
$(t,\omega)\mapsto R(\theta_t\omega)$.
A random variable $R$ is called tempered from below if $R^{-1}$ is tempered from above. In particular, if the random variable $R$ is tempered from below and such that $t \mapsto R(\theta_t\omega)$ is continuous  for all $ \omega \in\Omega$, then for any $\eps>0$ there exists a (random) constant $C_\eps(\omega)>0$ such that
\begin{equation*}
  R(\theta_t\omega)\ge C_\eps(\omega)e^{-\eps|t|} \qquad \text{for almost all} \quad \omega \in \Omega.
\end{equation*}
A sufficient condition for temperedness is that
\begin{equation}\label{eq5}
  \EE\sup_{t\in [0,1]}\log^+R(\theta_t\omega)<\infty.
\end{equation}

By \eqref{eq6} we obtain that $R(\omega)=\|\omega\|_{\beta^\prime,0,1}$ is tempered from above because $\log^+r\le r$ for $r\ge 0$.

Note that the set of all $\omega$ satisfying \eqref{eq4b} is invariant with respect to the flow $\theta$.
\medskip

We need the following simple result.

\begin{lemma}\label{l8} Let
$(R_i)_{i \in \NN}$ and $(v_i)_{i\in\NN}$ be sequences such that
 $R_i\ge C_\eps e^{-\eps i}$ for any $0< \eps< \mu$, $i\in\NN$,
and $v_i\le  v_0e^{-\mu i}$ for any $i \in \NN$, respectively.
Then for sufficiently small $v_0>0$ we have
\begin{equation*}
  v_i\le R_i ,\qquad i \in \NN.
\end{equation*}
\end{lemma}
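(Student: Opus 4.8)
The plan is to commit to a single value of the parameter in the admissible range and then exploit the strict gap between the two exponential rates. Concretely, I would first fix some $\eps$ with $0<\eps<\mu$, for instance $\eps=\mu/2$, and let $C_\eps>0$ be the corresponding constant furnished by the hypothesis, so that $R_i\ge C_\eps e^{-\eps i}$ for every $i\in\NN$. The key observation is that the upper bound $v_0e^{-\mu i}$ for $v_i$ decays strictly faster than this lower bound for $R_i$, because $\mu-\eps>0$; thus it is only the behaviour at the smallest index that matters, and that can be controlled by shrinking $v_0$.

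In detail, I would note that it suffices to establish
\[
 v_0 e^{-\mu i}\le C_\eps e^{-\eps i}\qquad\text{for all }i\in\NN,
\]
since then $v_i\le v_0 e^{-\mu i}\le C_\eps e^{-\eps i}\le R_i$, which is the claim. Rearranging, this inequality is equivalent to $v_0\le C_\eps e^{(\mu-\eps)i}$. Because $\mu-\eps>0$ and $i\ge 0$, the right-hand side is nondecreasing in $i$ and hence bounded below by $C_\eps$ (indeed by $C_\eps e^{\mu-\eps}$ if one adopts the convention $\NN=\{1,2,\dots\}$). Therefore the uniform choice $v_0\le C_\eps$ makes the required inequality hold simultaneously for every $i$, and ``sufficiently small $v_0>0$'' can be made precise as $0<v_0\le C_{\mu/2}$.

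I do not anticipate a genuine obstacle; the argument is a one-line comparison once the right value of $\eps$ is frozen. The only point that deserves care is precisely that $C_\eps$ depends on $\eps$, so one must select $\eps\in(0,\mu)$ \emph{before} choosing $v_0$ and may not attempt to send $\eps\to 0$; this is why the statement only yields an existential ``small $v_0$'' rather than an explicit threshold independent of the driving data.
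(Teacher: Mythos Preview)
Your argument is correct; fixing a single $\eps\in(0,\mu)$ and choosing $v_0\le C_\eps$ is precisely the intended comparison. The paper does not actually supply a proof of this lemma---it is stated as a ``simple result'' and left to the reader---so your proposal fills that gap in the natural way.
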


\medskip

If for instance we assume that a random variable $R>0$ is tempered from below, then we can find a random variable $C_\eps$ such that
$v_i<R(\theta_i\omega)$ holds for $v_0<C_\eps(\omega)$, where $v_i$ satisfies the assumptions of the previous Lemma.

\smallskip
We finish this section with several technical results that will be applied to conclude exponential decay of sequences in further sections.

\begin{lemma}\label{l2}
Let H be a function from $\bar B(0,\rho)\subset \RR^d$ into $\RR^d$, which is continuously differentiable, and zero at zero. Consider the balls $\bar B(0,R), \bar B(0,\hat R)\subset \RR^d$, with $\hat R=\hat R(R)\le\rho$ such that the latter is the
largest centered ball such that
\begin{equation*}
  \bar B(0,{\hat R})\subset H^{-1}(\bar B(0,R)).
\end{equation*}
Then there exists $\kappa \in  (0, \infty)$ such that
\begin{equation*}
  \liminf_{R\to 0}\frac{\hat R(R)}{R}\ge \kappa.
\end{equation*}
\end{lemma}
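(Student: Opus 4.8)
The plan is to use the differentiability of $H$ at $0$ to get a Lipschitz-type bound near the origin, and then to read off from that bound how large a ball around $0$ must be mapped into $\bar B(0,R)$. Since $H$ is continuously differentiable on $\bar B(0,\rho)$ and $H(0)=0$, for any $\varepsilon>0$ there is an $r_\varepsilon\in(0,\rho]$ such that $\|H(x)\|\le (\|DH(0)\|+\varepsilon)\|x\|$ for all $x$ with $\|x\|\le r_\varepsilon$; indeed one may take $M:=\sup_{\|x\|\le r_\varepsilon}\|DH(x)\|<\infty$ (finite by continuity of $DH$ on the compact ball) and use the mean value inequality $\|H(x)\|=\|H(x)-H(0)\|\le M\|x\|$ along the segment $[0,x]$, which stays in $\bar B(0,r_\varepsilon)$ by convexity. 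So set $M=M(r_\varepsilon)$; shrinking $r_\varepsilon$ only decreases $M$, and $\liminf_{r\to 0}M(r)=\|DH(0)\|=:L$.

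Next I would translate this into a statement about $\hat R(R)$. Fix a radius $r_0\in(0,\rho]$ once and for all, with associated bound $M_0=M(r_0)$. For $R>0$ small enough that $\min(R/M_0,\,r_0)=R/M_0$ — i.e. for $R\le M_0 r_0$ — every $x$ with $\|x\|\le R/M_0$ satisfies $\|x\|\le r_0$ and hence $\|H(x)\|\le M_0\|x\|\le R$, so that $\bar B(0,R/M_0)\subset H^{-1}(\bar B(0,R))$. Since $\hat R(R)$ is by definition the radius of the largest centered ball contained in $H^{-1}(\bar B(0,R))$ (and contained in $\bar B(0,\rho)$), we get $\hat R(R)\ge \min(R/M_0,\rho)=R/M_0$ for all sufficiently small $R$. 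Therefore
\begin{equation*}
  \liminf_{R\to 0}\frac{\hat R(R)}{R}\ge \frac{1}{M_0}>0,
\end{equation*}
and one may take $\kappa=1/M_0$ (or, optimizing, $\kappa=1/L$ if $L>0$, and any positive constant if $L=0$). This proves the claim.

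The only subtlety — and the step I would be most careful about — is the definition of $\hat R(R)$ and whether the inequality $\hat R(R)\ge R/M_0$ is really forced: one must check that the ball $\bar B(0,R/M_0)$ we produced is genuinely a competitor in the defining property of $\hat R$, i.e. that it is centered, contained in $H^{-1}(\bar B(0,R))$, and contained in $\bar B(0,\rho)$ (the latter because $R/M_0\le r_0\le\rho$). All three hold for $R$ small, so $\hat R(R)$, being the supremum of admissible radii, is at least $R/M_0$. No compactness or topological argument beyond the mean value inequality is needed; the continuity of $DH$ is used only to bound $M_0<\infty$ on one fixed ball. Note also that the lemma does not assert $\hat R(R)\to 0$ or any upper bound, so no lower bound on $\|H\|$ (e.g. via invertibility of $DH(0)$) is required.
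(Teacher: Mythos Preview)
Your proof is correct and reaches the same constant $\kappa=1/\sup\|DH\|$ as the paper, but by a more direct route. The paper argues topologically: it shows that on the boundary $\partial\bar B(0,\hat R)$ there must exist a point $x_R$ with $\|H(x_R)\|=R$ exactly (using that $\partial f_H^{-1}([0,R])\subset f_H^{-1}(\{R\})$ and that $\hat R$ is the distance from $0$ to this boundary), and then applies the mean value theorem to the ratio $\|x_R\|/\|H(x_R)\|=\hat R/R$. You bypass the existence of such an extremal point entirely by exhibiting the explicit competitor ball $\bar B(0,R/M_0)\subset H^{-1}(\bar B(0,R))$ and invoking the maximality of $\hat R$. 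Your approach is shorter and avoids the (slightly delicate) boundary/compactness reasoning; the paper's approach has the minor advantage of making the extremal geometry explicit, but this is not used elsewhere. One small point: your treatment of the degenerate case $M_0=0$ is only mentioned parenthetically; it would be cleaner to dispose of it up front (if $DH\equiv 0$ on some ball then $H\equiv 0$ there and $\hat R(R)\ge r_0$ for all $R>0$, so the liminf is $+\infty$).
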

\begin{proof}
For every sufficiently small $R$ there is an element $x_R$ in $\partial \bar B(0,\hat R)$ such that
\begin{equation*}
  \|H(x_R)\|=R.
\end{equation*}
To see the existence of  such an element consider the continuous function
\begin{equation*}
  f_H:\bar B(0,\rho)\to\RR^+,\quad f_H(u)=\|H(u)\|
\end{equation*}
that verifies $f_H(0)=0$, $R \leq \max_{x \in \bar B(0,\rho)} f_H(x)$. Then we have
\begin{align*}
  f_H^{-1}(\{R\})& =f_H^{-1}([0,R]\cap [R,\infty))=f_H^{-1}([0,R])\cap f_H^{-1}([R,\infty)) \\ & \supset f_H^{-1}([0,R])\cap\overline{f^{-1}_H([0,R])^c}
\end{align*}
and hence all arguments from the boundary of $f_H^{-1}([0,R])$ have the value $R$ with respect to $f_H$.  Note that the largest radius $\hat R$ is given by the infimum of the distances between the boundary $\partial H^{-1}(\bar B(0,R))=\partial f_H^{-1}([0,R])$ and zero. Since this boundary  is a compact set in $\bar B(0,\rho)$ and the mapping $x\rightarrow \|x\|$ is continuous, we have an element $x_R\in \partial f_H^{-1}([0,R])$ such that
\begin{equation*}
\hat R=\inf_{x\in \partial f_H^{-1}([0,R])} \|x\|= \|x_R\|.
\end{equation*}

Finally, applying the mean value theorem it follows
\begin{equation*}
  \frac{\|x_R\|}{R}=\frac{\|x_R\|}{\|H(x_R)\|}\ge \frac{1}{\sup_{\xi\in \bar B(0,\rho)}\|DH(\xi)\|}>0.
\end{equation*}
\end{proof}

For completeness, we state the following measurability result.

\begin{lemma}\label{l7}
Let  $H:[0, \infty) \rightarrow [0, \infty)$ be continuous and non-decreasing. Then the function
$$ J: [0, \infty) \rightarrow [0, \infty), \qquad J(x)= \max \{ r \geq 0: \, H(r) \leq x \}$$
is Borel--measurable.
\end{lemma}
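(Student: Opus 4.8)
The plan is to reduce the statement to the classical fact that a monotone real function is Borel--measurable. First I would check that $J$ is well defined. For fixed $x\ge 0$ the set $A_x:=\{r\ge 0:H(r)\le x\}=H^{-1}([0,x])$ is closed, by continuity of $H$, and since $H$ is non--decreasing it is an initial segment of $[0,\infty)$, i.e.\ an interval containing $0$ whenever $H(0)\le x$. Under the mild assumptions that $A_x$ is non--empty and bounded for each $x$ --- which hold in the situations where the lemma is applied, the boundedness being guaranteed as soon as $H(r)\to\infty$ --- we have $A_x=[0,J(x)]$, and the maximum $J(x)=\max A_x$ is attained precisely because $A_x$ is closed. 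This is really the only point requiring care; everything else is formal.

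Next I would observe that $J$ is non--decreasing. Indeed, if $0\le x_1\le x_2$ then $A_{x_1}\subseteq A_{x_2}$, hence $J(x_1)=\max A_{x_1}\le \max A_{x_2}=J(x_2)$. It is also convenient to compute the super--level sets explicitly: for any $c\ge 0$, $J(x)\ge c$ holds if and only if $c\in A_x$ (using that $A_x$ is closed, so $J(x)\in A_x$, together with monotonicity of $H$), that is, if and only if $H(c)\le x$. Therefore $\{x\ge 0:J(x)\ge c\}=\{x\ge 0:x\ge H(c)\}=[H(c),\infty)$, which is closed and in particular Borel.

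Finally I would conclude measurability. Since $J^{-1}([c,\infty))=[H(c),\infty)\in \bB([0,\infty))$ for every $c\ge 0$, and the sets $[c,\infty)$, $c\ge 0$, generate the Borel $\sigma$--algebra of $[0,\infty)$, the map $J$ is Borel--measurable; alternatively one may simply invoke that any non--decreasing function $[0,\infty)\to[0,\infty)$ has at most countably many discontinuities and is hence Borel. I do not expect any genuine obstacle in this argument: the measurability is immediate once monotonicity of $J$ is recorded, and the sole place where the hypotheses on $H$ enter is in making the defining maximum meaningful (continuity for the sup to be attained, growth of $H$ for $A_x$ to be bounded).
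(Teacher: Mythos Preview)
Your proof is correct and follows essentially the same route as the paper: the paper shows directly that $\{x:J(x)<\alpha\}=[0,H(\alpha))$, which is the complement of your computation $\{x:J(x)\ge c\}=[H(c),\infty)$, so the core identity is identical. Your additional remarks on well--definedness (non--emptiness and boundedness of $A_x$) and the alternative via monotonicity of $J$ are not in the paper but are welcome clarifications, since the lemma as stated indeed tacitly assumes $H(0)\le x$ and $H(r)\to\infty$ (or a truncation such as the $\wedge\rho$ used in the application).
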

\begin{proof}
Let $\alpha \geq 0$ and consider the set
$$ M(\alpha)= \{ x \in [0, \infty): \, J(x) < \alpha \}.$$
Then we have to check that $M(\alpha)$ belongs to $\mathcal{B}([0,\infty))$. Clearly,  $M(0)= \emptyset$, so assume $\alpha >0$.
By definition and since $H$ is non-decreasing, $J(x)< \alpha$ implies $H(\alpha) >x$. Vice versa $H(\alpha) >x$ implies $J(x) < \alpha$.
Hence
$$ M(\alpha) = \{ x \in [0, \infty): \, x < H(\alpha)  \} =[0, H(\alpha)).$$
\end{proof}

Now we investigate the H\"older-norm of a finite--dimensional semigroup $e^{A \cdot}$ generated by an operator $A$, whose estimates will be used below. The main assumption is that the spectrum of $A$ has a negative real part.
\begin{lemma}\label{l1}
Let  $e^{A \cdot}$ be the fundamental solution to
\begin{equation*}
  u^\prime=Au.
\end{equation*}
Let ${\rm Re}\,\sigma(A)<-\lambda<0$. Then there exists an $M\ge 1$ such that
\begin{equation*}
  \|e^{A\,t}\|\le Me^{-\lambda t}.
\end{equation*}
In addition, we have for $0\le s<t$
\begin{align}\label{semi}
\|e^{At}-e^{As}\|\leq M \|A\| (t-s) e^{-\lambda s},\qquad  \|e^{A(t-s)}-{\rm id} \|\leq M\|A\|(t-s),
\end{align}
where $\|A\|$ is the Euclidean norm of $A$.
\end{lemma}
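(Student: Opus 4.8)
The plan is to obtain the decay estimate $\|e^{At}\|\le Me^{-\lambda t}$ first, and then to read off the two bounds in \eqref{semi} from it by expressing the relevant differences as integrals of $Ae^{A\cdot}$. For the decay estimate I would set $B=A+\lambda\,{\rm id}$, so that $e^{At}=e^{-\lambda t}e^{Bt}$ and ${\rm Re}\,\sigma(B)={\rm Re}\,\sigma(A)+\lambda<0$; it then suffices to show $M:=\sup_{t\ge 0}\|e^{Bt}\|<\infty$. This is the classical asymptotic-stability fact for linear autonomous systems: writing $B$ in Jordan canonical form $B=SJS^{-1}$, the exponential $e^{Jt}$ is block diagonal, and the block associated with an eigenvalue $\mu$ of a Jordan block of size $k$ equals $e^{\mu t}$ times an upper triangular matrix whose nonzero entries are $t^j/j!$, $0\le j\le k-1$. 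Since every such $\mu$ has ${\rm Re}\,\mu<0$, each entry of $e^{Jt}$ — and hence $\|e^{Bt}\|$ — is dominated by a constant multiple of $(1+t^{d-1})e^{\rho t}$ with $\rho:=\max{\rm Re}\,\sigma(B)<0$, which is bounded on $[0,\infty)$. (Alternatively one may simply quote the standard stability theorem for $u'=Bu$.) Consequently $\|e^{At}\|=e^{-\lambda t}\|e^{Bt}\|\le Me^{-\lambda t}$ for $t\ge 0$, and evaluating at $t=0$ shows $M\ge\|{\rm id}\|=1$.

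For \eqref{semi} I would use $\frac{d}{dr}e^{Ar}=Ae^{Ar}$ to write, for $0\le s<t$,
\[ e^{At}-e^{As}=\int_s^t Ae^{Ar}\,dr,\qquad e^{A(t-s)}-{\rm id}=\int_0^{t-s}Ae^{Ar}\,dr. \]
Taking Euclidean norms and using $\|Ae^{Ar}\|\le\|A\|\,\|e^{Ar}\|\le M\|A\|e^{-\lambda r}$ together with the elementary bound $\int_0^{h}e^{-\lambda r}\,dr=\lambda^{-1}(1-e^{-\lambda h})\le h$ for $h\ge 0$, the second representation yields $\|e^{A(t-s)}-{\rm id}\|\le M\|A\|(t-s)$, while the first yields $\|e^{At}-e^{As}\|\le M\|A\|e^{-\lambda s}\int_0^{t-s}e^{-\lambda r}\,dr\le M\|A\|(t-s)e^{-\lambda s}$, which is exactly what is claimed.

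The only genuine obstacle is the decay bound $\|e^{At}\|\le Me^{-\lambda t}$; everything after it is a one-line integral estimate. That bound is a textbook consequence of the spectral structure of $A$, so in practice the only decision is how much of the Jordan-form computation to reproduce versus cite from the ODE literature.
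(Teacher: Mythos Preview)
Your proof is correct and matches the paper's own approach: the paper simply cites Amann's ODE text for the decay bound $\|e^{At}\|\le Me^{-\lambda t}$ (which you have unpacked via the Jordan-form computation) and invokes the mean value theorem for \eqref{semi}, which is exactly your integral representation $e^{At}-e^{As}=\int_s^t Ae^{Ar}\,dr$. You have supplied the details the paper leaves to the reference, but the route is the same.
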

The proof follows easily by the mean value theorem and Amann \cite{Ama90} Chapter 13.
As a consequence, for $0<s<t$ we have
\begin{align}\label{sem}
\begin{split}
\ltn e^{A(t-\cdot)}\rtn_{\beta,0,t}&=\sup_{0\leq r_1<r_2<t}\frac{\|e^{A(t-r_2)}-e^{A(t-r_1)}\|}{(r_2-r_1)^\beta}\le M\|A\| t^{1-\beta}
\end{split}
\end{align}
and
\begin{align}\label{sem1}
\begin{split}
\ltn e^{A(t-\cdot)}-e^{A(s-\cdot)}\rtn_{\beta,0,s} &=\sup_{0\leq r_1<r_2<s}\frac{\|e^{A(t-r_2)}-e^{A(s-r_2)}-(e^{A(t-r_1)}-e^{A(s-r_1)})\|}{(r_2-r_1)^\beta}\\
&= \sup_{0\leq r_1<r_2<s}\frac{\|(e^{A(t-s)}-{\rm id})(e^{A(s-r_1)}-e^{A(s-r_2)})\|}{(r_2-r_1)^\beta}\\
&\leq M^2 \|A\|^2 (t-s) s^{1-\beta}.
\end{split}
\end{align}

\smallskip

We finish the section by presenting a Gronwall-like lemma:
\begin{lemma}\label{l1b}
For $0<\eps<\lambda$ let $\hat\eps>0$ be a number such that
\begin{equation}\label{eq15}
  e^{-\lambda}+\hat\eps\le (1+\hat\eps)e^{-(\lambda-\eps)}.
\end{equation}
Let $(v_ n)_{n\geq 0}$ be a sequence of positive numbers such that
\begin{equation}\label{eq5b}
  v_n\le k \zeta_0e^{-\lambda n}+\hat\eps \sum_{j=0}^{n-1} v_je^{-\lambda(n-j-1)}
\end{equation}

where $\zeta_0,\,k$ are positive numbers\footnotemark. \footnotetext{For $j>n-1$ the previous sum is zero.}Then for $n\ge 0$ we have
\begin{equation*}
  v_n\le (1+\hat\eps)^{n}e^{-n(\lambda-\eps)}k\zeta_0.
\end{equation*}
\end{lemma}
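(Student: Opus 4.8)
The plan is to first establish, by induction on $n$, the intermediate bound
$v_n\le (e^{-\lambda}+\hat\eps)^{n}k\zeta_0$ for all $n\ge 0$, and then to deduce the assertion
by raising the hypothesis \eqref{eq15} to the $n$-th power. Set $m_n:=(e^{-\lambda}+\hat\eps)^{n}k\zeta_0$.
The base case $n=0$ is immediate: since the sum in \eqref{eq5b} is empty for $n=0$ (see the footnote), we get $v_0\le k\zeta_0=m_0$.

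For the inductive step, assume $v_j\le m_j$ for every $j\in\{0,\dots,n-1\}$. Inserting this into \eqref{eq5b} and using
$m_j e^{-\lambda(n-j-1)}=k\zeta_0 e^{-\lambda(n-1)}(e^{-\lambda}+\hat\eps)^{j}e^{\lambda j}$ together with $(e^{-\lambda}+\hat\eps)e^{\lambda}=1+\hat\eps e^{\lambda}$ gives
\begin{equation*}
  v_n\le k\zeta_0 e^{-\lambda n}+\hat\eps k\zeta_0 e^{-\lambda(n-1)}\sum_{j=0}^{n-1}\big(1+\hat\eps e^{\lambda}\big)^{j}.
\end{equation*}
Summing the geometric series via $\sum_{j=0}^{n-1}(1+\hat\eps e^{\lambda})^{j}=\frac{(1+\hat\eps e^{\lambda})^{n}-1}{\hat\eps e^{\lambda}}$ and simplifying (the factor $\hat\eps$ cancels against the denominator $\hat\eps e^{\lambda}$, leaving $e^{-\lambda n}$) one obtains
\begin{equation*}
  v_n\le k\zeta_0 e^{-\lambda n}\big(1+\hat\eps e^{\lambda}\big)^{n}=k\zeta_0\big(e^{-\lambda}+\hat\eps\big)^{n}=m_n,
\end{equation*}
which closes the induction.

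It then remains to apply \eqref{eq15}: raising both sides to the $n$-th power yields $(e^{-\lambda}+\hat\eps)^{n}\le (1+\hat\eps)^{n}e^{-n(\lambda-\eps)}$, whence $v_n\le m_n\le (1+\hat\eps)^{n}e^{-n(\lambda-\eps)}k\zeta_0$, the claimed estimate. An equivalent and slightly cleaner route is the substitution $v_n=e^{-\lambda n}\tilde v_n$, which turns \eqref{eq5b} into the classical discrete Gronwall inequality $\tilde v_n\le k\zeta_0+\hat\eps e^{\lambda}\sum_{j=0}^{n-1}\tilde v_j$; the standard induction then gives $\tilde v_n\le k\zeta_0(1+\hat\eps e^{\lambda})^{n}$, hence $v_n\le k\zeta_0(e^{-\lambda}+\hat\eps)^{n}$, and \eqref{eq15} finishes as before. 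There is no genuine obstacle here; the only points requiring care are the empty-sum convention and the bookkeeping in the geometric-series computation, i.e.\ verifying that the exponential factors combine exactly into $(e^{-\lambda}+\hat\eps)^{n}$.
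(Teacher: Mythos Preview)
Your proof is correct and follows essentially the same idea as the paper: both arguments establish by induction the intermediate bound $v_n\le (e^{-\lambda}+\hat\eps)^n k\zeta_0$ and then invoke \eqref{eq15}. The only cosmetic difference is that the paper introduces $Z_n$ for the right-hand side of \eqref{eq5b}, observes the one-step recursion $Z_{n+1}=e^{-\lambda}Z_n+\hat\eps v_n\le (e^{-\lambda}+\hat\eps)Z_n$, and applies \eqref{eq15} inside the induction, thereby avoiding the explicit geometric-series computation you carry out.
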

\begin{proof}
Let us denote the right hand side of \eqref{eq5b} by $Z_n$. We are going to show that
\begin{equation}\label{Z}
  Z_n\le (1+\hat\eps)^{n}e^{-n(\lambda-\eps)}k\zeta_0.
\end{equation}
Since $Z_0=k\zeta_0$ the result holds for $n=0$.
For $n=1$, thanks to the assumption (\ref{eq15}) we obtain
\begin{equation*}
v_1\le Z_1= k\zeta_0 e^{-\lambda}+\hat\eps v_0\le  k\zeta_0 e^{-\lambda}+\hat\eps  k\zeta_0  \le (1+\hat\eps)e^{-(\lambda-\eps)} k\zeta_0.
\end{equation*}
Suppose (\ref{Z}) holds for $n\ge 1$. Taking into account that $Z_{n+1}= e^{-\lambda}Z_n+\hat \eps v_n\leq (e^{-\lambda}+\hat \eps)Z_n$, we have
\begin{equation*}
  v_{n+1}\le Z_{n+1}\leq (e^{-\lambda}+\hat \eps)Z_n \le (1+\hat \eps)e^{-(\lambda-\eps)}Z_n\le (1+\hat\eps)^{n+1}e^{-(n+1)(\lambda-\eps)}k\zeta_0.
\end{equation*}
\end{proof}

\section{Integrals for a H{\"o}lder--continuous integrator with H\"older exponent greater than 1/2}\label{s1}

In this section, we present the Young-integral having a H\"older--continuous function with H\"older exponent greater than 1/2 as integrator. To be more precise, let $T>0$ and consider a mapping
\begin{equation*}
  g:[0,T]\to L(\RR^m,\RR^d)
\end{equation*}
such that $g\in C^\beta([0,T]; L(\RR^m,\RR^d))$.
Assuming that $\beta+\beta^\prime>1$ we can define the Young-integral with integrand $g$ and integrator $\omega \in C^{\beta^\prime}([0,T];\RR^m)$
\begin{equation*}
  \int_s^t gd\omega
\end{equation*}
for $0\leq s<t\leq T$, see \cite{You36}. Furthermore, one can represent this integral in terms of fractional derivatives: for $\alpha\in (0,1)$ we define
\begin{align*}\label{fractder}
 \begin{split}
    D_{{s}+}^\alpha g[r]=&\frac{1}{\Gamma(1-\alpha)}\bigg(\frac{g(r)}{(r-s)^\alpha}+\alpha\int_{s}^r\frac{g(r)-g(q)}{(r-q)^{1+\alpha}}dq\bigg),\\
    D_{{t}-}^{1-\alpha} \omega_{{t}-}[r]=&\frac{(-1)^{1-\alpha}}{\Gamma(\alpha)}
    \bigg(\frac{\omega(r)-\omega(t)}{(t-r)^{1-\alpha}}+(1-\alpha)\int_r^{t}\frac{\omega(r)-\omega(q)}{(q-r)^{2-\alpha}}dq\bigg),
\end{split}
\end{align*}
where $\omega_{{t}-}(\cdot)=\omega(\cdot)-\omega(t)$. Under the condition $\beta+\beta^\prime>1$, there exists an $\alpha$ such that $\alpha<\beta,\,\alpha+\beta^\prime>1$, and these inequalities ensure that the above operators exist. Then the Young-integral can be expressed as
\begin{equation}\label{eq12}
  \int_s^tgd\omega=(-1)^\alpha\int_s^tD_{{s}+}^\alpha g[r]D_{{t}-}^{1-\alpha} \omega_{{t}-}[r]dr,
  \end{equation}
see for instance \cite{Zah98}. Taking into account the definition of the fractional derivatives, it is easy to derive the following estimate for the above integral:
\begin{equation}\label{eq11}
  \bigg\|\int_s^tgd\omega\bigg\|\le C_{\alpha,\beta,\beta^\prime} \ltn \omega \rtn_{\beta^\prime,s,t}\bigg(\|g\|_{\infty,s,t}+(t-s)^{\beta}\ltn g \rtn_{\beta,s,t}\bigg)(t-s)^{\beta^\prime},
\end{equation}
which in particular implies that
\begin{equation*}
  [0,T]\ni t\mapsto  \int_0^tgd\omega\in C^{\beta^\prime}([0,T];\RR^d),
\end{equation*}
with
\begin{equation*}
  \bigg\|\int_0^tgd\omega\bigg\|_{\beta^\prime,0,T}\le C_{\alpha,\beta,\beta^\prime,T}\|g\|_{\beta,0,T} \ltn \omega \rtn_{\beta^\prime,0,T}.
\end{equation*}
For $T\le 1$ we shall denote $C_{\alpha,\beta, \beta^\prime,T}$ by $C_{\alpha,\beta,\beta^\prime}$ in the following.

We also know that the integral is additive: let $s\le \tau\le t$, then
\begin{equation*}
  \int_s^\tau gd\omega+\int_\tau^t gd\omega=\int_s^t gd\omega,
\end{equation*}
see  \cite{Zah98}, and for any linear operator $L:\RR^d\to\RR^m$
\begin{equation*}
  L\int_s^tgd\omega=\int_s^t(Lg)d\omega=\int_s^tLgd\omega
\end{equation*}
because $LD_{s+}^\alpha g= D_{s+}^\alpha Lg$.

Finally, for the Wiener shift flow  $\theta=(\theta_t)_{t\in\RR}$ given by (\ref{shift}) the following shift property of the integral holds:

\begin{lemma}\label{l4}
Let $\RR\ni t\mapsto \omega(t) \in \RR$ have a finite $\beta^\prime$--H{\"o}lder--norm for any closed finite subinterval of $\RR$ and similar for $ \RR \ni t\mapsto g(t) \in \RR$ with respect to the $\beta$--H{\"o}lder--norm, with $\beta+\beta^\prime>1$. Then
\begin{equation*}
  \int_{s+\tau}^{t+\tau}gd\omega=\int_s^tg(\cdot+\tau)d\theta_\tau\omega.
\end{equation*}
\end{lemma}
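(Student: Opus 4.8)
The plan is to reduce the claimed identity to the fractional-derivative representation \eqref{eq12} and perform an elementary change of variables. First I would fix $0\le s<t$ and $\tau\in\RR$, and choose $\alpha\in(0,1)$ with $\alpha<\beta$ and $\alpha+\beta^\prime>1$, which is possible since $\beta+\beta^\prime>1$; note the same $\alpha$ works for all the intervals involved since the H\"older exponents do not change under the shift. Using \eqref{eq12} on the interval $[s+\tau,t+\tau]$ we write
\begin{equation*}
  \int_{s+\tau}^{t+\tau}gd\omega=(-1)^\alpha\int_{s+\tau}^{t+\tau}D_{(s+\tau)+}^\alpha g[r]\,D_{(t+\tau)-}^{1-\alpha}\omega_{(t+\tau)-}[r]\,dr,
\end{equation*}
and apply \eqref{eq12} on $[s,t]$ to the right-hand side, with integrand $g(\cdot+\tau)$ and integrator $\theta_\tau\omega$, to get
\begin{equation*}
  \int_s^t g(\cdot+\tau)\,d\theta_\tau\omega=(-1)^\alpha\int_s^t D_{s+}^\alpha\big(g(\cdot+\tau)\big)[r]\,D_{t-}^{1-\alpha}(\theta_\tau\omega)_{t-}[r]\,dr.
\end{equation*}
So it suffices to compare the two integrands after substituting $r\mapsto r+\tau$ in the first integral.

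The core computation is then the pair of pointwise identities, for $r\in(s,t)$,
\begin{equation*}
  D_{(s+\tau)+}^\alpha g[r+\tau]=D_{s+}^\alpha\big(g(\cdot+\tau)\big)[r],\qquad
  D_{(t+\tau)-}^{1-\alpha}\omega_{(t+\tau)-}[r+\tau]=D_{t-}^{1-\alpha}(\theta_\tau\omega)_{t-}[r].
\end{equation*}
Both follow directly from the definitions of the fractional derivatives by the substitution $q\mapsto q+\tau$ inside the defining integrals, together with the translation identities $g(r+\tau)-g(q+\tau)$ appearing verbatim on both sides, and, for the second one, the observation that $(\theta_\tau\omega)(r)-(\theta_\tau\omega)(t)=\omega(r+\tau)-\omega(t+\tau)$ by \eqref{shift}, i.e. the additive constant $-\omega(\tau)$ cancels, so that $(\theta_\tau\omega)_{t-}(\cdot)=\omega_{(t+\tau)-}(\cdot+\tau)$. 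Since the prefactors $(r-s)^{-\alpha}$, $(r-q)^{-1-\alpha}$, $(t-r)^{-(1-\alpha)}$, $(q-r)^{-(2-\alpha)}$ are themselves translation invariant in the sense that $(r+\tau)-(s+\tau)=r-s$ etc., the two expressions coincide termwise.

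Finally, changing variables $r=r^\prime+\tau$ in the displayed integral over $[s+\tau,t+\tau]$ turns it into an integral over $[s,t]$ whose integrand, by the two identities above, equals that of $\int_s^t g(\cdot+\tau)\,d\theta_\tau\omega$; this proves the claim for $s<t$, and the case $s=t$ is trivial since both sides vanish. I do not expect any serious obstacle here: the only point requiring a little care is bookkeeping the constant $\omega(\tau)$ in the definition \eqref{shift} of $\theta_\tau\omega$ and checking that it drops out of $\omega_{t-}$-type expressions, and confirming that all the fractional-derivative integrals are absolutely convergent on the shifted interval (which holds because $g,\omega$ have finite H\"older norms on every compact interval, by hypothesis). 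An alternative, essentially equivalent route is to verify the identity first for step-function or smooth integrands, where the Young integral is a genuine Riemann--Stieltjes integral and the substitution $r\mapsto r+\tau$ is classical, and then pass to the limit using the continuity estimate \eqref{eq11}; I would mention this as a remark but carry out the fractional-derivative computation as the main argument.
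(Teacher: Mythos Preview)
Your proposal is correct and follows essentially the same approach as the paper's proof: both use the fractional-derivative representation \eqref{eq12}, verify the pointwise identities $D_{(s+\tau)+}^\alpha g[r+\tau]=D_{s+}^\alpha(g(\cdot+\tau))[r]$ and $D_{(t+\tau)-}^{1-\alpha}\omega_{(t+\tau)-}[r+\tau]=D_{t-}^{1-\alpha}(\theta_\tau\omega)_{t-}[r]$ via the substitution $q\mapsto q+\tau$ in the defining integrals, and then change variables $r\mapsto r+\tau$ in the outer integral. Your write-up is in fact slightly more detailed than the paper's (which displays only the $\omega$ computation and asserts the analogous $g$ identity), and your observation that the additive constant $-\omega(\tau)$ from \eqref{shift} cancels in $(\theta_\tau\omega)_{t-}$ is the one point worth making explicit.
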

\begin{proof}
For $1-\beta^\prime <\alpha<\beta$ we have
\begin{align*}
  D^{1-\alpha}_{t-}(\theta_\tau\omega)_{t-}[r]  =&\frac{(-1)^{1-\alpha}}{\Gamma(\alpha)}\bigg(\frac{\theta_\tau\omega(r)-\theta_\tau\omega(t)}{(t-r)^{1-\alpha}}+(1-\alpha)\int_{r}^t\frac{\theta_\tau\omega(r)-\theta_\tau\omega(q)}{(q-r)^{2-\alpha}}dq\bigg)\\
  =&\frac{(-1)^{1-\alpha}}{\Gamma(\alpha)}\bigg(\frac{ \omega(r+\tau)-\omega(t+\tau)}{(t-r)^{1-\alpha}}+(1-\alpha)\int_{r}^t\frac{\omega(r+\tau)-\omega(q+\tau)}{(q-r)^{2-\alpha}}dq\bigg)\\
  =&\frac{(-1)^{1-\alpha}}{\Gamma(\alpha)}\bigg(\frac{ \omega(r+\tau)-\omega(t+\tau)}{(t+\tau-(r+\tau))^{1-\alpha}}+(1-\alpha)\int_{r+\tau}^{t+\tau}\frac{\omega(r+\tau)-\omega(q)}{(q-(r+\tau))^{2-\alpha}}dq\bigg)\\
  =&D^{1-\alpha}_{(t+\tau)-}\omega_{(t+\tau)-}[r+\tau]
\end{align*}
and similar for $D_{s+}^\alpha g(\cdot+\tau)[r]=D_{(s+\tau)+}^\alpha g[r+\tau]$. It suffices to execute the variable transform $r \mapsto r+\tau$ in the integral.
\end{proof}

\medskip

The Young--integral introduced above can be applied to define pathwise stochastic integrals for the fractional Brownian motion $B^H$ with Hurst--parameter $H\in (1/2,1)$. In particular $B^H$ can be replaced by the canonical fractional Brownian motion in \eqref{eq5} which is H{\"o}lder--continuous with $\PP_H$ probability one.

\section{Differential equations driven by H{\"o}lder--continuous paths with H\"older-exponent larger than 1/2}\label{s2}

Let $F:\RR^d\to\RR^d$ and $G:\RR^d\to L(\RR^m,\RR^d)$, and let $\omega$ be a noisy input, considered as a function from $\RR^+$ to $\RR^m$. Then, for $T>0$ consider the equation
\begin{equation}\label{eq3}
  du(t)=F(u(t))dt+G(u(t))d\omega(t)
\end{equation}
with initial condition $u(0)=u_0\in\RR^d$. This equation is interpreted as
\begin{equation}\label{eq4}
  u(t)=u_0+\int_0^tF(u(r))dr+\int_0^tG(u(r))d\omega(r),\quad t\in [0,T],
\end{equation}
where the first integral is defined as a standard Riemann--integral while the second one is defined as the Young--integral introduced in Section \ref{s1}.

We will use the following assumptions on $F$ and $G$:
\begin{itemize}
 \item[(A1)] $F:\RR^d\to\RR^d$ is  continuously differentiable with bounded derivative,
 \item[(A2)] $G:\RR^d\to L(\RR^m,\RR^d)$ is twice continuously differentiable with bounded derivatives.
\end{itemize}

Regarding the existence of solutions, the next result follows by \cite{NuaRas02}, although with a slight modification of the phase spaces that appear in that reference; see also \cite{BH}, but notice that in this last paper a delay equation is considered, and therefore in our setting we should take the delay equal to zero.

\begin{theorem}\label{t1}
Suppose (A1) and (A2). If $\omega\in C^{\beta^\prime}([0,T];\RR^m)$ with $\beta^\prime > \beta>1/2$, then \eqref{eq3} (or equivalently \eqref{eq4}) has a unique solution $u\in C^\beta([0,T];\RR^d)$  for any $T>0$.
\end{theorem}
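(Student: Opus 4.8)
The plan is to prove existence and uniqueness by a fixed-point/contraction argument on short time intervals, combined with the a priori bound that follows from the linear growth forced by assumptions (A1)--(A2), so that the local solution can be extended to any $[0,T]$. First I would introduce the solution map
\begin{equation*}
  (\mathcal{T}u)(t)=u_0+\int_0^t F(u(r))\,dr+\int_0^t G(u(r))\,d\omega(r),\qquad t\in[0,\tau],
\end{equation*}
acting on a suitable closed ball in $C^\beta([0,\tau];\RR^d)$, and show that for $\tau=\tau(\|u_0\|,\ltn\omega\rtn_{\beta^\prime,0,T})$ small enough it maps the ball into itself and is a contraction in a slightly weakened norm (e.g.\ the sup-norm, or the $\beta$-H\"older norm with a smaller exponent, to avoid losing a derivative in the estimate). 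The crucial analytic input is the Young-integral estimate \eqref{eq11}: applied to $g=G(u(\cdot))$ it gives control of both $\|\int_0^\cdot G(u)\,d\omega\|_{\infty,0,\tau}$ and $\ltn\int_0^\cdot G(u)\,d\omega\rtn_{\beta,0,\tau}$ in terms of $\|G(u(\cdot))\|_{\beta,0,\tau}$, and one then bounds $\|G(u(\cdot))\|_{\beta,0,\tau}$ using (A2): $G$ is bounded with bounded derivative, so $\|G(u(\cdot))\|_{\infty}\le \|G\|_\infty$ and $\ltn G(u(\cdot))\rtn_{\beta,0,\tau}\le \|DG\|_\infty\,\ltn u\rtn_{\beta,0,\tau}$. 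The Riemann integral $\int_0^\cdot F(u)\,dr$ is easier: it is Lipschitz in $t$ hence in $C^\beta$, with norm controlled by $\|F\|_{\infty,0,\tau}$ and the length $\tau$.

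The self-mapping step then reads schematically: if $\|u\|_{\beta,0,\tau}\le K$ then
\begin{equation*}
  \|\mathcal{T}u\|_{\beta,0,\tau}\le \|u_0\|+C\tau^{1-\beta}\big(\|F\|_\infty+\|DF\|_\infty K\big)+C_{\alpha,\beta,\beta^\prime}\,\ltn\omega\rtn_{\beta^\prime,0,\tau}\big(\|G\|_\infty+\tau^{\beta}\|DG\|_\infty K\big),
\end{equation*}
so choosing $K=2\|u_0\|+1$, say, and then $\tau$ small (depending on $K$, on the coefficient bounds, and on $\ltn\omega\rtn_{\beta^\prime,0,T}$, which is finite since $\omega\in C^{\beta^\prime}$) forces the right-hand side below $K$. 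For the contraction I would estimate $\mathcal{T}u-\mathcal{T}\bar u$ using the same Young estimate applied to $g=G(u(\cdot))-G(\bar u(\cdot))$, together with the Lipschitz bounds on $F$ and on $G$ and the boundedness of $DG$ (and here $D^2G$ from (A2), to control the H\"older seminorm of $G(u(\cdot))-G(\bar u(\cdot))$ by $\|u-\bar u\|_\beta$ plus $\|u-\bar u\|_\infty \ltn u\rtn_\beta$); shrinking $\tau$ further gives a contraction constant $<1$. Banach's fixed-point theorem then yields a unique solution on $[0,\tau]$.

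Finally, to pass from $[0,\tau]$ to $[0,T]$ I would note that the length $\tau$ of the interval of existence depends on the data only through $\|u(t_0)\|$ at the starting point and through $\ltn\omega\rtn_{\beta^\prime,t_0,T}$, and that, because $F$ and $G$ have at most linear growth (being bounded with bounded derivatives — in fact $G$ is bounded, so $\|u(t)\|$ grows at most linearly and $F$-term linearly), a Gronwall-type estimate derived from \eqref{eq4} and \eqref{eq11} gives an a priori bound $\sup_{t\in[0,T]}\|u(t)\|\le \Phi(\|u_0\|,T,\ltn\omega\rtn_{\beta^\prime,0,T})$ that is uniform on $[0,T]$. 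Hence the step size $\tau$ can be taken bounded below by a fixed positive number on $[0,T]$, and one concatenates finitely many local solutions; uniqueness on $[0,T]$ follows from uniqueness on each subinterval. I expect the main obstacle to be the contraction estimate for the Young integral of $G(u(\cdot))-G(\bar u(\cdot))$: getting a clean Lipschitz-type bound on the $\beta$-H\"older seminorm of that difference (which is where assumption (A2) on $D^2G$ is genuinely needed) is the technical heart, and it is precisely the point where the cited references \cite{NuaRas02,BH} do the careful bookkeeping; since the statement explicitly invokes them, in the paper's proof I would quote those estimates rather than reprove them, and only indicate the modification of phase spaces (working with $C^\beta$ rather than a fractional-Sobolev-type space) needed to match the present formulation.
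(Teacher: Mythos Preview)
The paper does not actually prove this theorem: it simply cites \cite{NuaRas02} (with the remark that the phase spaces there are slightly different) and \cite{BH} (with delay set to zero). Your sketch is a faithful outline of the fixed-point argument carried out in those references, and you correctly anticipated this in your last paragraph. So in spirit you and the paper agree.

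One inaccuracy worth flagging: you write that ``$G$ is bounded with bounded derivative, so $\|G(u(\cdot))\|_\infty\le\|G\|_\infty$'' and later ``in fact $G$ is bounded''. Assumption (A2) only requires $DG$ and $D^2G$ bounded, not $G$ itself; likewise (A1) bounds $DF$, not $F$. Hence $F$ and $G$ have linear growth but need not be bounded. This does not break the argument---the self-mapping estimate on a ball of radius $K$ in $C^\beta$ still goes through, with $\|G(u(\cdot))\|_{\infty,0,\tau}\le\|G(u_0)\|+\|DG\|_\infty K$ in place of $\|G\|_\infty$, and the a priori bound is obtained via Gronwall from the linear growth---but the bookkeeping is slightly different from what you wrote, and it is exactly this linear-growth version that \cite{NuaRas02} handles.
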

Note that the derivatives of $G$ are defined as follows
\begin{align*}
   &DG:\RR^d\to L(\RR^d,L(\RR^m,\RR^d))\equiv L(\RR^d\times \RR^m,\RR^d),\\
   &D^2G:\RR^d\to L(\RR^d,L(\RR^d,L(\RR^m,\RR^d))\equiv L(\RR^d\times\RR^d\times \RR^m,\RR^d).
\end{align*}

We define a matrix $A \in \RR^{d\times d}$ and the function $\hat F:\RR^d\to\RR^d$ by
$$A=DF(0), \qquad \hat F(x)=F(x)-Ax, \quad \text{ for } x\in  \RR^d.$$
Then $x\mapsto \hat F(x)$ and $x\mapsto D\hat F(x)=DF(x)-DF(0)$ are continuous, and $D\hat F(0)=0$.

We also will need further the assumptions:
\begin{itemize}
 \item[(A3)] $F(0)=0$, $G(0)=0$,
 \item[(A4)] $DG(0)=0$.
\end{itemize}

\smallskip

We consider then the following equation
\begin{align}\label{eq2}
\begin{split}
  u(t)=&e^{At}u_0+\int_0^te^{A(t-r)}\hat F(u(r))dr+\int_0^te^{A(t-r)}G(u(r))d\omega(r),
\end{split}
\end{align}
where the last integral has to be understood as in Section \ref{s1}.
\begin{lemma}\label{l5}
Let $T>0$ and assume  (A1) and (A2). If $\omega\in C^{\beta^\prime}([0,T]; \RR^m)$ with $\beta^\prime > \beta>1/2$, then equation (\ref{eq2}) has a unique solution $u\in C^\beta([0,T];\RR^d)$ that also agrees with (\ref{eq4}). Furthermore, if we also assume (A3), equation (\ref{eq2}) possesses the trivial solution $u=0$.

\end{lemma}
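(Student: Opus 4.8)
The plan is to recognize equation \eqref{eq2} as the variation-of-constants (Duhamel) reformulation of \eqref{eq4} relative to the linear part $A=DF(0)$, so that existence and uniqueness transfer from Theorem \ref{t1} while the trivial solution follows by a direct substitution. The key step will be to prove that a function $u\in C^\beta([0,T];\RR^d)$ solves \eqref{eq4} \emph{if and only if} it solves \eqref{eq2}. Granting this, Theorem \ref{t1} applied to \eqref{eq4} (equivalently \eqref{eq3}) produces the unique $C^\beta$-solution, which is then also the unique $C^\beta$-solution of \eqref{eq2} and in particular agrees with \eqref{eq4}; and if (A3) holds, then $\hat F(0)=F(0)-A\cdot 0=0$ and $G(0)=0$, so inserting $u\equiv 0$ into the right-hand side of \eqref{eq2} annihilates all three terms and gives the trivial solution when $u_0=0$. (Alternatively, existence and uniqueness for \eqref{eq2} could be obtained directly by rerunning the fixed-point argument behind Theorem \ref{t1} with the modified data $e^{At}u_0$, $e^{A(t-\cdot)}\hat F(\cdot)$, $e^{A(t-\cdot)}G(\cdot)$---whose regularity is controlled by Lemma \ref{l1}---but the identification with \eqref{eq4} still requires the equivalence.)

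For the equivalence I would isolate a deterministic variation-of-constants identity. Fix $v_0\in\RR^d$ and $\psi\in C^\beta([0,T];\RR^d)$ with $\psi(0)=0$, and set
\begin{equation}\label{eq-voc}
  W(t)=e^{At}v_0+\int_0^t e^{A(t-r)}\,d\psi(r),\qquad t\in[0,T];
\end{equation}
here the Young integral has a Lipschitz (hence $C^\beta$) integrand and is well defined since $2\beta>1$, and $W\in C^\beta([0,T];\RR^d)$ using \eqref{eq11} and the semigroup estimates \eqref{sem}--\eqref{sem1}. I claim $W(t)=v_0+\int_0^t AW(r)\,dr+\psi(t)$ for all $t$. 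Indeed, $\int_0^t Ae^{Ar}v_0\,dr=(e^{At}-{\rm id})v_0$, and for the double-integral part a Fubini-type interchange of the outer Riemann integral with the $d\psi$-integral (licit since $2\beta>1$; see \cite{Zah98}, \cite{You36}) yields
\[
\int_0^t\!\Big(\int_\rho^t Ae^{A(r-\rho)}\,dr\Big)\,d\psi(\rho)=\int_0^t\!\big(e^{A(t-\rho)}-{\rm id}\big)\,d\psi(\rho)=\int_0^t e^{A(t-\rho)}\,d\psi(\rho)-\psi(t),
\]
so that $\int_0^t AW(r)\,dr=(e^{At}-{\rm id})v_0+\int_0^t e^{A(t-\rho)}\,d\psi(\rho)-\psi(t)=W(t)-v_0-\psi(t)$, which is the claimed identity. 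Conversely, if $\bar u\in C^\beta$ satisfies $\bar u(t)=v_0+\int_0^t A\bar u(r)\,dr+\psi(t)$, then $z=\bar u-W$ obeys $z(t)=\int_0^t Az(r)\,dr$, whence $\|z(t)\|\le\|A\|\int_0^t\|z(r)\|\,dr$ and Gronwall's lemma forces $z\equiv 0$. Thus \eqref{eq-voc} and the integral equation $w(t)=v_0+\int_0^t Aw(r)\,dr+\psi(t)$ have exactly the same $C^\beta$-solutions.

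It then remains to apply this with $v_0=u_0$ and $\psi=\phi$, where $\phi(t)=\int_0^t\hat F(u(r))\,dr+\int_0^t G(u(r))\,d\omega(r)$. Since $D\hat F=DF-DF(0)$ is bounded and $G$ is Lipschitz by (A1)--(A2), the compositions $\hat F(u)$ and $G(u)$ belong to $C^\beta$, hence $\phi\in C^\beta$ with $\phi(0)=0$ (by \eqref{eq11}). Because $F(x)=Ax+\hat F(x)$, equation \eqref{eq4} reads exactly $u(t)=u_0+\int_0^t Au(r)\,dr+\phi(t)$, which by the previous paragraph holds if and only if $u(t)=e^{At}u_0+\int_0^t e^{A(t-r)}\,d\phi(r)$. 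Splitting this last integral by linearity in the integrator, its part against the $C^1$ path $t\mapsto\int_0^t\hat F(u(r))\,dr$ reduces to $\int_0^t e^{A(t-r)}\hat F(u(r))\,dr$, while its part against $t\mapsto\int_0^t G(u(r))\,d\omega(r)$ equals $\int_0^t e^{A(t-r)}G(u(r))\,d\omega(r)$ by the associativity (change-of-integrator) property of the Young integral (see \cite{Zah98}); this recovers exactly \eqref{eq2}. The main obstacle is thus precisely the justification of these Young-integral manipulations---the Fubini interchange in the identity for \eqref{eq-voc} and the associativity used in the last step---which, however, are classical once $\beta+\beta^\prime>1$; everything else is routine or already subsumed in Theorem \ref{t1}.
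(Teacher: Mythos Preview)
Your proof is correct and the argument is sound, but it takes a genuinely different route from the paper. The paper does not prove the equivalence of \eqref{eq2} and \eqref{eq4} by direct algebraic manipulation of Young integrals; instead it argues by \emph{smooth approximation}: for piecewise linear $\omega^n$ interpolating $\omega$ with stepsize $T2^{-n}$, classical calculus shows that \eqref{eq2} and \eqref{eq4} coincide, and then one passes to the limit using the continuity of the solution map in the driving path (Chapter~10 of \cite{FV10}). Existence and uniqueness for \eqref{eq2} itself is simply cited from \cite{GMS08}.

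Your approach is more elementary and self-contained: once the Fubini interchange and the associativity (change-of-integrator) identity for Young integrals are granted---and both are indeed classical for $\beta+\beta^\prime>1$, see \cite{Zah98}---you obtain the equivalence by a direct Duhamel computation plus a Gronwall uniqueness step, without appealing to rough-path continuity results. The paper's approximation argument, by contrast, is shorter to write but imports heavier machinery from \cite{FV10}. Either route is perfectly valid here; yours has the advantage of making transparent exactly which structural properties of the Young integral are being used.
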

\begin{proof}
In view of the regularity of $\hat F$ and $G$, the existence and uniqueness of a solution to (\ref{eq2}) follows by \cite{GMS08}. Now we want to prove that such a solution coincides with the solution of (\ref{eq4}). To achieve such a result, notice that when $\omega$ is a sufficiently smooth path, then (\ref{eq4}) and (\ref{eq2}) are the same solutions
using classical calculus.

Now it suffices to follow an approximation argument. To be more precise, consider \eqref{eq2} for a sequence of driving paths $(\omega^n)_{n\in\NN}$ which are given by the piecewise linear interpolation of $w$ with stepsize $T2^{-n}$. Then the sequence $(u^n)_{n\in\NN}$ related to these
piecewise linear paths converges to the solution of (\ref{eq4}) and (\ref{eq2}) as well, being both of them driven by $\omega$, see Chapter 10 in \cite{FV10}.
Therefore both solutions are the same.
\end{proof}

Note that a sufficient condition for the convergence of $(\omega^n)_{n\in\NN}$ to $\omega$ is that $\omega\in C^{\beta^{\prime\prime}}([0,T];\RR^m)$ for $\beta^{\prime}<\beta^{\prime\prime}\le 1$.\\

In the following we can restrict the mappings $F$ and $G$ to be defined on some neighborhood of zero. For $\rho>0$ assume:
\begin{itemize}
 \item[(A1)'] $F: \bar B(0,\rho)\subset \RR^d\to \RR^d$ is  continuously differentiable with bounded derivative,
 \item[(A2)'] $G:\bar B(0,\rho)\subset\RR^d\to L(\RR^m,\RR^d)$ is twice continuously differentiable with bounded derivatives.
\end{itemize}

In order to look at the {\em local} asymptotic behavior of (\ref{eq2}), we define $\chi$ to be the cut--off function
\begin{equation}\label{eq20}
  \chi:\RR^d\to \bar B(0,1)\subset \RR^d \quad \textrm{where} \quad \chi(u)=\left\{\begin{array}{lcl}
  u& \textrm{if} & \|u\|\le \frac12\\
  0& \textrm{if} & \|u\|\ge 1
  \end{array}
   \right. .
\end{equation}
In particular the norm of $\chi(u)$ is bounded by 1. Let us assume that $\chi$ is twice continuously differentiable with bounded derivatives $D\chi$ and $D^2 \chi$. Let us denote by $L_{D\chi},\,L_{D^2\chi}$ the bounds for those derivatives. Now for $u\in \RR^d$ and some $0<\hat R\le \rho$ we set
$$\chi_{\hat R}(u)=\hat R \cdot \chi(u/\hat R)\in \bar B(0,\hat R).$$
Then it is not difficult to see that its first derivative $D\chi_{\hat R}$ is bounded by $L_{D\chi}$, while the second derivative $D^2\chi_{\hat R}$ is bounded by $ {L_{D^2\chi}} / \hat R$.

Define the functions $\hat F_{\hat R}:=\hat F\circ \chi_{\hat R}:\RR^d \to \RR^d$ and $G_{\hat R}:=G\circ \chi_{\hat R}:\RR^d \to L(\RR^m,\RR^d)$. Replacing $\hat F$ by $\hat F_{\hat R}$ and $G$ by $G_{\hat R}$ in (\ref{eq2}), we obtain a unique solution of the corresponding equation (\ref{eq2}). This statement follows by Lemma \ref{l5}, since $\hat F_{\hat R}$ is continuously differentiable with bounded derivative and $G_{\hat R}$ is twice differentiable with bounded first and second derivatives.

\smallskip

\begin{lemma}\label{l6} Assume (A1)', (A2)', (A3) and (A4). Then for every $R>0$ there exists a positive $\hat R\le \rho$ such that for $u, \, z\in \RR^d$
\begin{equation}\label{eq7}
  \|\hat F_{\hat R}(u)\|\le R L_{D\chi}\|u\|,
  \end{equation}
\begin{align}\label{eq23}
 & \|G_{\hat R}(u)\|\le R L_{D\chi} \|u\|,
\end{align}
\begin{align}\label{eq23b}
& \|G_{\hat R}(u)-G_{\hat R}(z)\|\le R L_{D\chi}\|u-z\|.
\end{align}

\end{lemma}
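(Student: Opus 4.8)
The plan is to exploit the three facts that $\hat F$, $G$ are smooth near $0$, that $\hat F(0)=\hat F'(0)$-type vanishing holds ($\hat F(0)=0$ and $D\hat F(0)=0$ by the definition of $\hat F$ together with (A3)), and that $G(0)=0$, $DG(0)=0$ (this is exactly (A3)+(A4)). Since the cut-off $\chi_{\hat R}$ maps $\RR^d$ into $\bar B(0,\hat R)$ and equals the identity on $\bar B(0,\hat R/2)$, composing with it confines all the relevant function evaluations to a ball of radius $\hat R$ that we are free to shrink. The strategy is: pick $\hat R$ small enough that on $\bar B(0,\hat R)$ the derivatives $D\hat F$ and $DG$ are as small as we like — this is possible precisely because $D\hat F(0)=0$ and $DG(0)=0$ and these maps are continuous by (A1)', (A2)' — and then convert smallness of the derivative into the stated linear bounds via the mean value inequality, paying attention to the factor $L_{D\chi}$ that the chain rule produces.

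First I would set up the continuity input: given $R>0$, choose $r_0\in(0,\rho]$ so that $\sup_{\xi\in\bar B(0,r_0)}\|D\hat F(\xi)\|\le R$ and $\sup_{\xi\in\bar B(0,r_0)}\|DG(\xi)\|\le R$; this is legitimate since $D\hat F(0)=DF(0)-DF(0)=0$ and $DG(0)=0$ by (A4), and both are continuous. Now take $\hat R=r_0$ (or any value $\le r_0$). For \eqref{eq23b}: for $u,z\in\RR^d$ the map $G_{\hat R}=G\circ\chi_{\hat R}$ has derivative $DG(\chi_{\hat R}(\cdot))\circ D\chi_{\hat R}(\cdot)$, so along the segment joining $u$ and $z$ the mean value inequality gives $\|G_{\hat R}(u)-G_{\hat R}(z)\|\le \sup\|DG(\chi_{\hat R}(\xi))\|\cdot\sup\|D\chi_{\hat R}(\xi)\|\cdot\|u-z\|$; since $\chi_{\hat R}$ takes values in $\bar B(0,\hat R)\subseteq\bar B(0,r_0)$ the first supremum is $\le R$, and the second is $\le L_{D\chi}$ by the bound on $D\chi_{\hat R}$ recorded just before the lemma, yielding \eqref{eq23b}. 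For \eqref{eq23}, take $z=0$ and use $G_{\hat R}(0)=G(\chi_{\hat R}(0))=G(0)=0$ from (A3); this is \eqref{eq23b} specialised. For \eqref{eq7}, the identical argument with $\hat F$ in place of $G$ works, using $\hat F_{\hat R}(0)=\hat F(0)=0$ (from (A3), since $\hat F(0)=F(0)-A\cdot 0=0$) and $\sup_{\bar B(0,r_0)}\|D\hat F\|\le R$ together with $\|D\chi_{\hat R}\|\le L_{D\chi}$.

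The only mild subtlety — and the place I would be most careful — is the chain-rule bookkeeping: one must use that $D\chi_{\hat R}(u)=D\chi(u/\hat R)$ is bounded by $L_{D\chi}$ \emph{uniformly in $\hat R$}, which is what makes the constant in the conclusion independent of $\hat R$, and one must confirm that $\chi_{\hat R}$ genuinely lands in $\bar B(0,\hat R)$ so that the derivative bounds on $\hat F$, $G$ over $\bar B(0,r_0)$ apply after we have fixed $\hat R\le r_0$. There is no real analytic obstacle here; the lemma is a packaging of "vanishing derivative at $0$ plus continuity gives arbitrarily small Lipschitz constant near $0$," and the cut-off is what turns a local Lipschitz estimate into a globally defined one. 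I would present it in the order \eqref{eq23b} $\Rightarrow$ \eqref{eq23}, then \eqref{eq7}, since the first is the master estimate and the others are instances of the same computation.
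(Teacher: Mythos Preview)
Your proposal is correct and follows essentially the same approach as the paper: both pick $\hat R\le\rho$ small enough that $\sup_{\bar B(0,\hat R)}\|D\hat F\|\le R$ and $\sup_{\bar B(0,\hat R)}\|DG\|\le R$ (using $D\hat F(0)=0$, $DG(0)=0$ and continuity), then apply the mean value inequality together with the chain rule and the uniform bound $\|D\chi_{\hat R}\|\le L_{D\chi}$. The only cosmetic difference is that you prove \eqref{eq23b} first and obtain \eqref{eq23} by specialising $z=0$, whereas the paper treats \eqref{eq7} and \eqref{eq23} directly and then does \eqref{eq23b} by factoring through $\|\chi_{\hat R}(u)-\chi_{\hat R}(z)\|$; the content is identical.
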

\begin{proof}
Since $D\hat F$ and $DG$ are continuous, being $DF(0)=0$ and $DG(0)=0$, for any $R>0$ we can choose an $\hat R\le \rho$ such that
\begin{equation*}
  \sup_{\|v\|\le \hat R}\|D\hat F(v)\|\le R \qquad \textrm{and}  \qquad  \sup_{\|v\|\le \hat R}\|D G(v)\|\le R.
\end{equation*}
% and since $\hat F(0)=0$,
%\begin{equation*}
%  \|\hat F(u)\|\le  \sup_{\|v\|\le \hat R}\|D\hat F(v)\|\|u\|\le R\|u\|, \quad \textrm{for } \|u\|\leq \hat{R}
%  \end{equation*}}
Then for $u\in \RR^d$ we have
\begin{align*}
  \|\hat F_{\hat R}(u)\|&\le \sup_{z\in \RR^d}\|D(\hat F(\chi_{\hat R}(z)))\| \|u\| \le  \sup_{\|v\|\le \hat R}\|D\hat F(v)\|\sup_{z\in \RR^d}\|D\chi_{\hat R}(z)\|\|u\|\\
  &\le  \sup_{\|v\|\le \hat R}\|D\hat F(v)\|L_{D\chi}\|u\|\le R L_{D\chi}\|u\|,
  \end{align*}
and we obtain (\ref{eq7}). Due to the fact that $G(0)=0$ we can follow the same steps to prove (\ref{eq23}).

Finally, due to the regularity of $G$, we have
\begin{align*}
\|G_{\hat R}(u)-G_{\hat R}(z)\|& \le \sup_{\|v\|\le \hat R}\|DG(v)\|\| \chi_{\hat R} (u)-\chi_{\hat R} (z)\|\\
& \le L_{D\chi} \sup_{\|v\|\leq \hat R}\|DG(v)\| \|u-z\| \leq R L_{D\chi} \|u-z\|.
\end{align*}
\end{proof}

\section{Local exponential asymptotic stability}\label{s3}

In order to study the large time behavior of (\ref{eq2}), we consider iteratively a family of differential equations for $n\in\ZZ^+$ defined on the fixed interval $[0,1]$. In particular we analyze the following sequence of equations with driving function $\theta_n\omega$ and a coefficient $\hat R$ depending on $\theta_n\omega$:
\begin{align*}
  u^n(t)=e^{At}u^n(0)& +\int_0^t e^{A(t-r)}\hat F_{\hat R(\theta_n\omega)}(u^n(r))dr\\
  &+\int_0^t e^{A(t-r)}G_{\hat R(\theta_n\omega)}(u^n(r))d\theta_n\omega(r),\qquad t \in [0,1].
\end{align*}
We set $u^0(0)=u_0$ and $u^n(0)=u^{n-1}(1)$ for $n\in \NN$. Under (A1)' and (A2)' the functions $\hat F_{\hat R}$, $G_{\hat R}$ satisfy the conditions of Lemma \ref{l5}, so for any $n\in \NN$ each one of the above problems has a unique solution $u^n \in  C^\beta([0,1];\RR^d)$.

\smallskip

In what follows we want to estimate the H\"older--norm of each solution $u^n$ on $[0,1]$. The next assumption we need is:
\begin{itemize}
 \item[(A5)] We have  ${\rm Re}\,\sigma(A)<-\lambda<0$ for $A=DF(0)$.
\end{itemize}

Regarding the standard Riemann--integral, by Lemma \ref{l1} and \eqref{eq7} we have
\begin{align*}
\bigg\|\int_0^\cdot e^{A(\cdot-r)}\hat F_{\hat R(\theta_n\omega)}(u^n(r))dr\bigg\|_{\infty,0,1}\le MR(\theta_n \omega) L_{D\chi} \|u^n\|_{\infty,0,1},
  \end{align*}
with the relationship between $R(\omega)$ and $\hat R(\omega)$ given in Lemma \ref{l6}. Furthermore, for the H{\"o}lder--seminorm, thanks to (\ref{semi}),
\begin{align*}
  &\ltn\int_0^\cdot e^{A(\cdot-r)} \hat F_{\hat R(\theta_n\omega)}(u^n(r))dr\rtn_{\beta,0,1}\\
 & =\sup_{0\leq s<t\leq 1} \frac{\bigg \|\int_s^te^{A(t-r)}\hat F_{\hat R(\theta_n\omega)}(u^n(r))dr+\int_0^s (e^{A(t-r)}-e^{A(s-r)})F_{\hat R(\theta_n\omega)}(u^n(r))dr\bigg\|}{(t-s)^{\beta}}\\
  &\le \sup_{0\leq s<t\leq 1}  \bigg((t-s)^{1-\beta}\sup_{r\in[s,t]}(\|e^{A(t-r)}\|\|\hat F_{\hat R(\theta_n\omega)}(u^n(r))\|)\bigg)\\
  &\,\, +\sup_{0\leq s<t\leq 1} \bigg(\frac{s}{(t-s)^\beta} \sup_{r\in[0,s]}(\|e^{A(t-r)}-e^{A(s-r)}\|\|\hat F_{\hat R(\theta_n\omega)}(u^n(r))\|)\bigg)\\
&  \le M R(\theta_n\omega) L_{D\chi}\|u^n\|_{\infty,0,1}
 +M  \|A\|R(\theta_n\omega) L_{D\chi} \|u^n\|_{\infty,0,1}  \\
& \le  M(1+\|A\|)R(\theta_n\omega) L_{D\chi} \|u^n\|_{\infty,0,1},
\end{align*}
and therefore
\begin{align}\label{eq26a}
\begin{split}
 \bigg \|\int_0^\cdot   & e^{A(\cdot-r)} \hat F_{\hat R(\theta_n\omega)}(u^n(r))dr \bigg \|_{\beta,0,1} \leq M(2+\|A\|)R(\theta_n\omega) L_{D\chi}\|u^n\|_{\beta,0,1}.
\end{split}
\end{align}

Now we estimate the $\beta$--H{\"o}lder--norm of the integral containing $G_{\hat R}$. Choose a $0<\alpha<1/2,\,\alpha+\beta^\prime>1$ and assume that $0\le s<t\le 1$. Then from (\ref{eq11})
\begin{align}\label{eq22}
\begin{split}
  &\bigg\|\int_0^t e^{A(t-r)}G_{\hat R(\theta_n\omega)}(u^n(r))d\theta_n\omega(r)-\int_0^s e^{A(s-r)}G_{\hat R(\theta_n\omega)}(u^n(r))d\theta_n\omega(r) \bigg\|\\
  & \le\bigg\|\int_s^t e^{A(t-\cdot)}G_{\hat R(\theta_n\omega)}(u^n(r))d\theta_n\omega(r) \bigg\|\\
  & \, \, +\bigg\| \int_0^s (e^{A(t-r)}-e^{A(s-r)}) G_{\hat R(\theta_n\omega)}(u^n(r))d\theta_n\omega(r) \bigg\|\\
  & \le C_{\alpha,\beta,\beta^\prime} \ltn \theta_n\omega \rtn_{\beta^\prime} \|e^{A(t-\cdot)}G_{\hat R(\theta_n\omega)}(u^n(\cdot))\|_{\beta,0,t}(t-s)^{\beta^\prime}\\
  & \, \, + C_{\alpha,\beta,\beta^\prime}  \ltn \theta_n\omega \rtn_{\beta^\prime} \|(e^{A(t-\cdot)}-e^{A(s-\cdot)})G_{\hat R(\theta_n\omega)}(u^n(\cdot))\|_{\beta,0,s}  s^{\beta^\prime}.
\end{split}
\end{align}
Since for two any $\beta$--H{\"o}lder--continuous functions $f,\,g$ we have
\begin{equation*}
  \|fg\|_{\beta,0,t}\le \|f\|_{\infty,0,t}\|g\|_{\beta,0,t}+\|g\|_{\infty,0,t}\ltn f\rtn_{\beta,0,t},
\end{equation*}
it follows
\begin{align*}
  \|e^{A(t-\cdot)}G_{\hat R(\theta_n\omega)}(u^n(\cdot))\|_{\beta,0,t}\le& \|e^{A(t-\cdot)}\|_{\infty,0,t}\|G_{\hat R(\theta_n\omega)}(u^n(\cdot))\|_{\beta,0,t}\\
  &+\|G_{\hat R(\theta_n\omega)}(u^n(\cdot))\|_{\infty,0,t}\ltn e^{A(t-\cdot)}\rtn_{\beta,0,t}.
\end{align*}
Thanks to (\ref{eq23}) and (\ref{eq23b}) we obtain
\begin{align*}
\|G_{\hat R(\theta_n\omega)}(u^n(\cdot))\|_{\beta,0,t}&=\sup_{s\in[0,t]}\|G_{\hat R(\theta_n\omega)}(u^n(s))\|\\
&+\sup_{0\leq r_1<r_2\leq t} \frac{\|G_{\hat R(\theta_n\omega)}(u^n(r_2))-G_{\hat R(\theta_n\omega)}(u^n(r_1))\|}{(r_2-r_1)^\beta}\\
&\leq R(\theta_n\omega) L_{D\chi}\bigg(\|u^n\|_{\infty,0,t}+\sup_{0\leq r_1<r_2\leq t} \frac{\|u^n(r_2)-u^n(r_1)\|}{(r_2-r_1)^\beta}\bigg)\\
&=  R(\theta_n\omega) L_{D\chi} \|u^n\|_{\beta,0,t},
\end{align*}
hence, taking into account Lemma \ref{l1} and (\ref{sem}), it follows that
\begin{align*}
  \|e^{A(t-\cdot)}G_{\hat R(\theta_n\omega)}(u^n(\cdot))\|_{\beta,0,t}&\le MR(\theta_n\omega) L_{D\chi} \|u^n\|_{\beta,0,1}+M\|A\|R(\theta_n\omega) L_{D\chi} \|u^n\|_{\infty,0,1}\\
  &\leq M R(\theta_n\omega) L_{D\chi}(1+\|A\|)\|u^n\|_{\beta,0,1}.
\end{align*}
In a similar way, using (\ref{semi}) and (\ref{sem1}) in particular we obtain
\begin{align*}
  \|(e^{A(t-\cdot)}-e^{A(s-\cdot)})G_{\hat R(\theta_n\omega)}(u^n(\cdot))\|_{\beta,0,s}&\le  M\|A\| R(\theta_n\omega) L_{D\chi}\\ &\qquad \qquad \times  (1+M\|A\|)\|u^n\|_{\beta,0,1} (t-s),
\end{align*}
and therefore
\begin{align*}
\ltn \int_0^\cdot e^{A(\cdot-r)}G_{\hat R(\theta_n\omega)}(u^n(r))d\theta_n\omega(r) \rtn_{\beta,0,1}&\leq C_{\alpha,\beta,\beta^\prime}  \ltn \theta_n\omega \rtn_{\beta^\prime} M R(\theta_n\omega) L_{D\chi}\\
&\qquad \qquad \times(1+2\|A\|+M\|A\|^2)  \|u^n\|_{\beta,0,1}.
\end{align*}
Using the same kind of calculations we get
\begin{align*}
  \bigg\|\int_0^\cdot e^{A(t-\cdot)} G_{\hat R(\theta_n\omega)}(u(r))d\theta_n\omega(r) \bigg\|_{\infty,0,1}&\leq C_{\alpha,\beta,\beta^\prime}  \ltn \theta_n\omega \rtn_{\beta^\prime}  M R(\theta_n\omega) L_{D\chi}
 \\  &\qquad \qquad \times
 (1+\|A\|)\|u^n\|_{\beta,0,1}.
\end{align*}
Collecting these estimates we have
\begin{align*}
 \bigg \|\int_0^\cdot & e^{A(\cdot-r)}G_{\hat R(\theta_n\omega)}(u(r))d\theta_n\omega(r) \bigg\|_{\beta,0,1}\leq K  \ltn \theta_n\omega \rtn_{\beta^\prime,0,1}R(\theta_n\omega)\|u^n\|_{\beta,0,1},
 \end{align*}
 where
 \begin{equation}\label{k}
 K=\max\{1,C_{\alpha,\beta,\beta^\prime} \}M^2 L_{D\chi}(2+3\|A\|+\|A\|^2)
 \end{equation}
 using that $M \geq 1$.
Note that the constant $K$ can be also used to estimate the constant $M L_{D\chi}(2+\|A\|)$ in \eqref{eq26a}, i.e. we have
\begin{align}\label{eq26}
\begin{split}
 \bigg \|\int_0^\cdot   & e^{A(\cdot-r)} \hat F_{\hat R(\theta_n\omega)}(u^n(r))dr \bigg \|_{\beta,0,1} \leq
 K  R(\theta_n\omega)\|u^n\|_{\beta,0,1}.
\end{split}
\end{align}

Define now the function
\begin{equation}\label{eq8}
  u(t)=u^n(t-n)\quad \text{if}\,\, t\in [n,n+1].
\end{equation}
{On account of Lemma  \ref{l4}, for $t\in [n,n+1]$ we have
\begin{align*}
  u(t) & = e^{A(t-n)}u(n) +\int_n^te^{A(t-r)}\hat F_{\hat R(\theta_n\omega)}(u(r))dr+\int_n^te^{A(t-r)} G_{\hat R(\theta_n\omega)}(u(r))d\omega(r)\\
 & =  e^{At}u_0+\sum_{j=0}^{n-1}e^{A(t-j-1)} \int_{j}^{j+1}e^{A(j+1-r)}\hat F_{\hat R(\theta_j\omega)}(u(r))dr \\ & \quad   + \sum_{j=0}^{n-1}e^{A(t-j-1)}  \int_{j}^{j+1}e^{A(j+1-r)} G_{\hat R(\theta_j\omega)}(u(r))d\omega(r) \\
  & \quad  +\int_{n}^{t}e^{A(t-r)}\hat F_{\hat R(\theta_j\omega)}(u(r))dr+\int_n^{t}e^{A(t-r)} G_{\hat R(\theta_j\omega)}(u(r))d\omega(r)\\
  &=e^{At}u_0+\sum_{j=0}^{n-1}e^{A(t-j-1)} \int_0^1e^{A(1-r)}\hat F_{\hat R(\theta_j\omega)}(u^j(r))dr \\ & \quad + \sum_{j=0}^{n-1}e^{A(t-j-1)}\int_0^1e^{A(1-r)} G_{\hat R(\theta_j\omega)}(u^j(r))d\theta_j\omega(r)  \\
  & \quad +\int_{0}^{t-n}e^{A(t-n-r)}\hat F_{\hat R(\theta_n\omega)}(u^n(r))dr+\int_0^{t-n}e^{A(t-n-r)} G_{\hat R(\theta_n\omega)}(u^n(r))d\theta_n\omega(r).
\end{align*}}
Note that the $\beta$--H\"older norm of the last two terms of the above expression can be calculated following the previous estimates. However, the terms under the sum can be estimated even in a simpler way, since
\begin{align*}
 \bigg\| &e^{A(\cdot-j-1)} \int_{0}^{1}e^{A(1-r)} G_{\hat R(\theta_j\omega)}(u^j(r))d\theta_j\omega(r)\bigg \|_{\beta,n,n+1}\\
 %&=\bigg\| e^{A(\cdot-j-1)} \int_{0}^{1}e^{A(1-r)} G_{\hat R(\theta_j\omega)}(u^j(r))d\theta_j\omega\bigg \|_{\infty,n,n+1}\\
 % &+\ltn e^{A(\cdot-j-1)} \rtn_{\beta,n,n+1} \bigg\| \int_{0}^{1}e^{A(1-r)} G_{\hat R(\theta_j\omega)}(u^j(r))d\theta_j\omega\bigg \| \\
   &\leq \|e^{A(\cdot-j-1)} \|_{\beta,n,n+1} \bigg\| \int_{0}^{\cdot}e^{A(\cdot-r)} G_{\hat R(\theta_j\omega)}(u^j(r))d\theta_j\omega(r)\bigg \|_{\infty,0,1},
\end{align*}
and from Lemma \ref{l1}, it is simple to obtain that
$$ \|e^{A(\cdot-j-1)} \|_{\beta,n,n+1} \leq M(1+\|A\|)e^{-\lambda(n-j-1)},$$
giving us
\begin{align*}
 \bigg\| &e^{A(\cdot-j-1)} \int_{0}^{1}e^{A(1-r)} G_{\hat R(\theta_j\omega)}(u^j(r))d\theta_j\omega(r)\bigg \|_{\beta,n,n+1}\\
 &\leq M^2(1+\|A\|)^2 e^{-\lambda(n-j-1)}C_{\alpha,\beta,\beta^\prime}  \ltn \theta_j\omega \rtn_{\beta^\prime}   R(\theta_j\omega) L_{D\chi}\|u^j\|_{\beta,0,1}\\
 &\leq K e^{-\lambda(n-j-1)} \ltn \theta_j\omega \rtn_{\beta^\prime}   R(\theta_j\omega)\|u^j\|_{\beta,0,1},
 \end{align*}
where $K$ has been introduced in (\ref{k}). Following similar steps,
\begin{align*}
 \bigg\| &e^{A(\cdot-j-1)} \int_{0}^{1}e^{A(1-r)}\hat F_{\hat R(\theta_j\omega)}(u^j(r))dr \bigg \|_{\beta,n,n+1}\\
 &\leq M^2(1+\|A\|) L_{D\chi}e^{-\lambda(n-j-1)} R(\theta_j\omega)\|u^j\|_{\beta,0,1}\\
  &\leq K e^{-\lambda(n-j-1)} R(\theta_j\omega)\|u^j\|_{\beta,0,1}.
\end{align*}
Hence
\begin{align*}
\begin{split}
  \|u^n\|_{\beta,0,1}&\le  \|u_0\|  \|e^{A\cdot}\|_{\beta,n,n+1}+K\sum_{j=0}^{n-1}
   R(\theta_j\omega)(1+\ltn \theta_j\omega \rtn_{\beta^\prime,0,1})\|u^j\|_{\beta,0,1}e^{-\lambda(n-j-1)}\\
   &+K R(\theta_n\omega) (1+\ltn \theta_n\omega \rtn_{\beta^\prime,0,1}) \|u^n\|_{\beta,0,1}. \\
\end{split}
\end{align*}

Let $\eps,\,\hat\eps$ given by Lemma \ref{l1b}, assuming in addition that $\hat\eps<1$ and
\begin{equation}\label{cond}
\log(1+\hat \eps)\leq \lambda-\eps.
\end{equation}

Consider
\begin{equation*}
  R(\omega)=\frac{\hat \eps}{2K(1+\ltn \omega \rtn_{\beta^\prime,0,1})}
\end{equation*}
and define $\hat R(\omega)$ such that
\begin{align}\label{Rhat}
  &\hat R(\omega)=\max\bigg\{\hat r:\sup_{\|v\| \leq \hat r}(\|DG(v)\|+\|D\hat F(v)\|)\le  R(\omega)\bigg\}\wedge \rho.
\end{align}
With this choice, the coefficient in front of $\|u^n\|_{\beta,0,1}$ on the right hand side of the above expression is less than or equal $1/2$, since $\hat \eps<1$. As a consequence,
\begin{align*}
  \frac{1}{2}\|u^n\|_{\beta,0,1}\le &  \|u_0\|  \|e^{A\cdot}\|_{\beta,n,n+1} +\frac{\hat \eps}{2} \sum_{j=0}^{n-1} e^{-\lambda (n-j-1)} \|u^j\|_{\beta,0,1}
\end{align*}
and hence
\begin{align*}
\|u^n\|_{\beta,0,1}  \le& 2 M(1+\|A\|) \|u_0\|e^{-\lambda n}+\hat\eps \sum_{j=0}^{n-1}e^{-\lambda(n-j-1)}  \|u^j\|_{\beta,0,1}.
  \end{align*}
Taking $v_j=\|u^j\|_{\beta,0,1}$, $\zeta_0=\|u_0\|$ and $k=2 M(1+\|A\|)$, Lemma \ref{l1} ensures that
\begin{equation*}
\|u^n\|_{\beta,0,1}\le (1+\hat\eps)^n e^{-n(\lambda-\eps)} k \|u_0\|\le k \|u_0\| e^{-n((\lambda-\eps)-\log(1+\hat\eps))}.
\end{equation*}

Note that $\hat R$ defined above is measurable, see Lemma \ref{l7}. Furthermore, if $
\ltn\omega \rtn_{\beta^\prime,0,1}$
satisfies
\begin{align}
\label{ass-w}
\lim_{t \rightarrow \pm \infty} \frac{ \log^+ \ltn \theta_t \omega \rtn_{\beta^\prime,0,1}}{t} =0,
\end{align}
then  according to Lemma \ref{l2} it follows that for a sufficiently small $\eps>0$ there exists $C_\eps(\omega)$ and $\kappa\in (0,\infty]$ such that
\begin{equation*}
  \hat R(\theta_t\omega)\ge \frac{\kappa}{2}R(\theta_t\omega)\ge \frac{\kappa}{2} C_\eps(\omega) e^{-\eps |t|}
\end{equation*}
for sufficiently large $|t|$. If $\omega$ is a sample path of a stochastic process, this means that $R$ and $\hat R$ are tempered from below. Condition \eqref{ass-w} holds in particular if
 $\omega$ is a sample path of the canonical fractional Brownian motion with Hurst-parameter $H>1/2$ defined on the probability space introduced in Section  \ref{s0}.

 Now, due to (\ref{cond}) and Lemma \ref{l8}, we can find a zero neighborhood depending on $\omega$  such that for $u_0$ contained in this neighborhood we have
\begin{equation*}
  \|u^n\|_{\beta,0,1}\le \frac{\hat R(\theta_n\omega)}{2}\quad \text{for all } n\in\ZZ^+.
\end{equation*}
Then we have
\begin{equation*}
  \hat F_{\hat R(\theta_n\omega)}(u^n(r))=\hat F(u^n(r)),\quad G_{\hat R(\theta_n\omega)}(u^n(r))=G(u^n(r))
\end{equation*}
for $r\in [0,1]$ and $n\in \ZZ^+$.  Then we see that $u$ defined by \eqref{eq8} solves \eqref{eq2} on any interval $[0,T]$. Furthermore, we have obtained the following stability result for the solution $u$:
\smallskip

\begin{theorem}\label{t3} 
Suppose that $\omega\in C_0^{\beta^\prime}(\RR;\RR^d)$ and that conditions \eqref{ass-w}, $(A1)'-(A2)'$ and $(A3)-(A5)$ hold. 
Then for every $\hat \eps$, $\eps$ and $\lambda$ satisfying (\ref{eq15}) and (\ref{cond}), there exists a  neighborhood  of zero (depending on $\omega$) such that if $u_0$
is contained in this neighborhood the solution of \eqref{eq2} with initial condition $u_0$ is exponentially stable with an exponential rate less than or equal to $(\lambda-\eps)-\log(1+\hat \eps)$.
\end{theorem}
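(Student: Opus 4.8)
The plan is to turn the continuous‑time claim into a discrete recursion by slicing $[0,\infty)$ into unit intervals and tracking the $\beta$–Hölder norm of the solution on each. First I would freeze, on $[n,n+1]$, a cut‑off radius $\hat R(\theta_n\omega)$ adapted to the shifted path and replace $\hat F,G$ in \eqref{eq2} by $\hat F_{\hat R(\theta_n\omega)},G_{\hat R(\theta_n\omega)}$; by Lemma \ref{l5} this yields a globally well‑posed problem on $[0,1]$ with driver $\theta_n\omega$, and gluing the pieces via $u^n(0)=u^{n-1}(1)$ produces, thanks to the shift property of the Young integral (Lemma \ref{l4}), a function $u$ as in \eqref{eq8} solving the cut‑off version of \eqref{eq2} on every $[0,T]$.

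Next I would bound $\|u^n\|_{\beta,0,1}$. Writing $u$ on $[n,n+1]$ as the variation‑of‑constants sum over $j=0,\dots,n-1$ plus the final partial interval, each summand is controlled by Lemma \ref{l1} (in particular $\|e^{A(\cdot-j-1)}\|_{\beta,n,n+1}\le M(1+\|A\|)e^{-\lambda(n-j-1)}$), the Young estimate \eqref{eq11}, and the linear‑growth/Lipschitz bounds \eqref{eq7},\eqref{eq23},\eqref{eq23b} for the localized coefficients, giving a contribution $\le K e^{-\lambda(n-j-1)}R(\theta_j\omega)(1+\ltn\theta_j\omega\rtn_{\beta^\prime,0,1})\|u^j\|_{\beta,0,1}$ with $K$ as in \eqref{k}, while the free term contributes $2M(1+\|A\|)\|u_0\|e^{-\lambda n}$. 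I would then pick $\hat\eps$ as in Lemma \ref{l1b} with in addition $\hat\eps<1$ and \eqref{cond}, and set $R(\omega)=\hat\eps/\bigl(2K(1+\ltn\omega\rtn_{\beta^\prime,0,1})\bigr)$, so that the self‑coefficient in front of $\|u^n\|_{\beta,0,1}$ is $\le 1/2$; absorbing it leaves exactly the recursion \eqref{eq5b} with $v_j=\|u^j\|_{\beta,0,1}$, $\zeta_0=\|u_0\|$, $k=2M(1+\|A\|)$, and Lemma \ref{l1b} gives $\|u^n\|_{\beta,0,1}\le k\|u_0\|e^{-n((\lambda-\eps)-\log(1+\hat\eps))}$.

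Finally I must close the loop by checking the cut‑off is never triggered when $u_0$ is small. Defining $\hat R(\omega)$ by \eqref{Rhat} — a Borel function by Lemma \ref{l7} — condition \eqref{ass-w} makes $R$ tempered from below, and applying Lemma \ref{l2} to $H(v)=\|DG(v)\|+\|D\hat F(v)\|$ (continuous, zero at zero) gives $\hat R(\theta_t\omega)\ge(\kappa/2)R(\theta_t\omega)\ge(\kappa/2)C_\eps(\omega)e^{-\eps|t|}$ for large $|t|$, so $\hat R$ is tempered from below. Since \eqref{cond} makes the decay rate $(\lambda-\eps)-\log(1+\hat\eps)$ strictly positive, Lemma \ref{l8} lets me compare the exponentially small sequence $\|u^n\|_{\beta,0,1}$ with the subexponential lower bound for $\hat R(\theta_n\omega)/2$ and produce a zero neighborhood (depending on $\omega$) such that $u_0$ in it forces $\|u^n\|_{\beta,0,1}\le\hat R(\theta_n\omega)/2$ for all $n\in\ZZ^+$; there $\hat F_{\hat R(\theta_n\omega)}=\hat F$ and $G_{\hat R(\theta_n\omega)}=G$ along the trajectory, so $u$ solves the genuine equation \eqref{eq2}, and the bound on $\|u^n\|_{\beta,0,1}$ (a fortiori on $\|u^n\|_{\infty,0,1}$) yields exponential decay with rate $(\lambda-\eps)-\log(1+\hat\eps)$.

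The step I expect to be the real obstacle is this last one: the a priori estimate for $u^n$ is only valid as long as the cut‑off has not been activated, so there is a circular dependence between ``$u^n$ stays below $\hat R(\theta_n\omega)/2$'' and ``$\hat F_{\hat R},G_{\hat R}$ coincide with $\hat F,G$.'' Breaking it requires both a strictly positive exponential decay rate for $\|u^n\|$ (secured by \eqref{cond}) and a lower bound on $\hat R(\theta_n\omega)$ decaying slower than any exponential (secured by temperedness from below, i.e. \eqref{ass-w} together with Lemma \ref{l2}); Lemma \ref{l8} is precisely the device that turns these two facts into the desired neighborhood of admissible initial conditions. Everything else is bookkeeping of Hölder seminorms with the semigroup bounds of Lemma \ref{l1} and the Young estimate \eqref{eq11}.
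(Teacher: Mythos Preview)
Your proposal is correct and follows essentially the same route as the paper: localize via the cut--off on each unit interval, derive the recursion for $\|u^n\|_{\beta,0,1}$ from the semigroup and Young estimates, apply the discrete Gronwall Lemma~\ref{l1b} after choosing $R(\omega)$ to absorb the self--term, and then use temperedness (Lemmas~\ref{l2}, \ref{l7}, \ref{l8}) to produce the $\omega$--dependent neighborhood on which the cut--off is inactive. One clarification: the ``circularity'' you flag is not actually present, since all the a~priori estimates are carried out for the cut--off equation, which is globally well posed regardless of where the trajectory goes; the smallness of $u_0$ is only needed at the very end to identify the cut--off solution with the solution of the genuine equation~\eqref{eq2}.
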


\begin{remark}
 The assumptions on $\omega$ of the previous Theorem are in particular satisfied by $\PP_H$-almost all sample paths
 of the canonical $H$-fractional Brownian motion.
\end{remark}

\begin{remark}
The solution of (\ref{eq2}) is locally exponential stable with any rate less than $\lambda$. Indeed, for any arbitrary $\mu <\lambda$ we can find $\eps$ and $\hat \eps$ satisfying (\ref{eq15}) and (\ref{cond}) such that
$$\lambda>\lambda-\eps-\log(1+\hat \eps)>\mu.$$
\end{remark}

\end{document}